\numberwithin{equation}{section}
\newtheorem{thm}[equation]{Theorem} 
\newtheorem{prop}[equation]{Proposition}
\newtheorem{lemma}[equation]{Lemma} 
\newtheorem{example}[equation]{Example}
\newtheorem{remark}[equation]{Remark}
\DeclareMathOperator{\Alt}{Alt}
\DeclareMathOperator{\Ext}{Ext}
\DeclareMathOperator{\Ima}{Im}
\DeclareMathOperator{\Span}{Span}
\newcommand{\kk}{\text{\bf k}}
\newcommand{\NN}{\mathbb N}
\newcommand{\DOT}{\setlength{\unitlength}{1pt}\begin{picture}(2.5,2)
               (1,1)\put(2,3.5){\circle*{3}}\end{picture}}
\newcommand{\Hom}{\mbox{\rm Hom\,}}
\newcommand{\ot}{\otimes}
\newcommand{\cH}{\mathcal{H}}
\newcommand{\chlk}{\cH_{\lambda,\kappa}}
\newcommand{\chabl}{\cH_{\lambda,\alpha,\beta}}
\newcommand{\CC}{\mathbb{C}}
\newcommand{\HH}{{\rm HH}}
\begin{document}

\begin{abstract}
We give Braverman-Gaitsgory style conditions for general 
PBW deformations
of skew group algebras 
formed from finite groups acting on Koszul algebras.
When the characteristic divides the order of the group,
this includes deformations of the group action as well as of the Koszul relations.
\end{abstract}
\title[Deforming group actions on Koszul algebras] 
{Deforming group actions on Koszul algebras}

\date{June 22, 2016}
\author{Anne V.\ Shepler}
\address{Department of Mathematics, University of North Texas,
Denton, Texas 76203, USA}
\email{ashepler@unt.edu}
\author{Sarah Witherspoon}
\address{Department of Mathematics\\Texas A\&M University\\
College Station, Texas 77843, USA}\email{sjw@math.tamu.edu}
\thanks{
The second author was partially supported by
NSF grant \#DMS-1401016.}

\maketitle

\section{Introduction}

Braverman and Gaitsgory~\cite{BG}  gave conditions for an 
algebra
to be a PBW deformation of a Koszul algebra.
Etingof and Ginzburg~\cite{EG} adapted these conditions
to the setting of a Koszul ring over a semisimple group ring $\CC G$
using results of Beilinson, Ginzburg, and Soergel~\cite{BGS}
in order to study symplectic reflection algebras. 
These are certain kinds of
deformations of a skew group algebra $\CC [x_1,\ldots, x_{2n}]\rtimes G$ 
that preserve a symplectic group action.
More generally, Drinfeld~\cite{Drinfeld} considered such deformations 
of a skew group algebra $\CC [x_1,\ldots,x_n]\rtimes G$
for $G$ an arbitrary finite group acting linearly.
We showed~\cite{quad} how to adapt the techniques
of Braverman and Gaitsgory
to an algebra defined over a group ring $kG$ that is not necessarily
semisimple,   aiding exploration of deformations
of a skew group algebra $S\rtimes G$ for $S$ any Koszul algebra
and $G$ any finite group.  
There, we  examined deformations  preserving the
action of $G$ on the Koszul algebra $S$. 
However, other types of
deformations are possible, some
arising only in the modular setting, where the 
characteristic of the field $k$ divides the order of $G$.
Here,
we study deformations of $S\rtimes G$
that deform not only the generating relations of the Koszul algebra 
$S$ but also deform the action of $G$ on $S$.  
This construction recollects 
the graded affine Hecke algebras of Lusztig~\cite{Lusztig89}, in which
a group action is deformed; in the nonmodular setting, these
were shown by Ram and the first author~\cite{RamShepler} to be
isomorphic to Drinfeld's deformations. 

Every deformation of an algebra defines a Hochschild
2-cocycle of that algebra. 
A central question in deformation theory asks which
cocycles may be lifted to deformations.
We use homological techniques in this paper to answer this question
in our context: For $S$ any Koszul algebra with action of a finite group $G$,
we show in Theorem~\ref{thm:main} 
that obstructions to lifting cocycles on $S\rtimes G$
correspond to concrete conditions on parameter functions defining
potential PBW deformations.
Such deformations are filtered algebras with associated
graded algebra precisely $S\rtimes G$. 
Our theorem generalizes 
\cite[Theorem 5.4]{quad} to include deformations of the group action. 
It applies to many algebras of interest  that are  
deformations of algebras of the form
$S\rtimes G$.
For example, one might take
$S$ to be the symmetric algebra (polynomial ring) $S(V)$ 
on a finite
dimensional vector space $V$,
or a skew (quantum) polynomial ring 
$S_{\bf q}(V)$ with multiplication skewed
by a tuple ${\bf q} = (q_{ij})$ of scalars, 
or a skew exterior algebra, 
or even the Jordan plane or a Sklyanin algebra.

Our primary tool is a twisted product resolution
constructed by Guccione, Guccione, and Valqui~\cite{GGV}
and adapted in~\cite{quad}.
We use it here to prove Theorem~\ref{thm:hom},  a more homological version
of our main Theorem~\ref{thm:main} from which we prove Theorem~\ref{thm:main}
as a corollary. 
In the nonmodular setting, a simpler resolution
suffices, one that is induced directly from the Koszul resolution 
of $S$ itself.
The twisted product resolution we use here
partitions homological information according to type;
cochains corresponding to deformations of the group action
and to deformations of the Koszul relations live on two distinct parts
of the resolution. Conditions for PBW deformations include interaction
among the parts.  
We obtain explicit conditions in 
the special case that the Koszul
algebra $S$ is a polynomial ring in Theorem~\ref{RawPBWConditions},
generalizing \cite[Theorem~3.1]{ueber}.
Our result may also be proven directly via the Composition-Diamond Lemma,
used by Khare~\cite[Theorem~27]{Khare} for 
deformations of the action of a cocommutative algebra on a polynomial ring. 
An advantage of our approach is that it yields conditions 
much more generally for all Koszul algebras.
When the characteristic does not divide the group order, we 
strengthen \cite[Theorem~4.1]{ueber}
by showing in Theorem~\ref{thm:nonmod} 
that a deformation of the group action and Koszul relations together
is isomorphic to one in which only the Koszul relations are deformed. 
We give an example to show that Theorem~\ref{thm:nonmod} is
false in the modular setting. 

Let $k$ be any field.  We assume the characteristic of $k$
is not 2 throughout to make some results easier to state. 
All tensor products are over $k$ unless otherwise indicated,
that is, $\otimes=\otimes_k$. We assume that in each graded or 
filtered  $k$-algebra,  elements of $k$ have 
degree~$0$.

\section{PBW Deformations of Koszul algebras twisted by groups}\label{sec:PBW}

In this section, we recall some definitions and state our
main result giving Braverman-Gaitsgory style conditions for
PBW deformations. The proof will be given in Section~\ref{sec:hom2} after
we recall and develop the needed homological algebra. 

\subsection*{PBW deformations}
Let $\kk$ be a ring with unity (for example,
the field $k$ or a group ring $kG$).
Let $\cH$ be  a finitely generated filtered $\kk$-algebra, 
so that we may write $\cH=T_{\kk}(U)/(P)$
for some finite dimensional $\kk$-bimodule $U$ and ideal
$(P)$ generated by a subset
$P$ of the tensor algebra $T_{\kk}(U)$ 
consisting of filtered 
elements.
Thus elements of $P$ may be 
nonhomogeneous with respect to the 
grading on the free algebra $T_{\kk}(U)$ 
with $U$ in degree $1$.
An element of $T_{\kk}(U)$ has {\em filtered degree d} if
it lies in the $d$-th filtered piece
$\oplus_{i\leq d} \, (U)^{\otimes_{\kk} i}$
of $T_{\kk}(U)$ 
but not in the $(d+1)$-st.
We associate to any presentation of a filtered algebra a homogenous
version,
$$\text{HomogeneousVersion}\big(T_{\kk}(U)/(P)\big)
=T_{\kk}(U)/(R),$$ 
where 
$R=\cup _d\, \{\pi_d(p): p\in P \text{ of filtered degree } d\}$
and $\pi_d:T_{\kk}(U)\rightarrow (U)^{\otimes_{\kk} d}$ 
projects onto the homogeneous component of degree $d$.

We say that a filtered algebra $\cH$ with a given
presentation 
is a {\em PBW deformation} of its homogeneous version
if it has the {\em PBW property},
i.e., the associated graded algebra of $\cH$ coincides
with the homogeneous version:
$$
\text{Gr}(\cH)\cong \text{HomogeneousVersion}(\cH)
\quad\text{ as graded algebras.}
$$ 
 Given a fixed presentation in terms
of generators and relations, 
we often merely say that $\cH$ is
a PBW deformation.
This terminology originated from the Poincar\'e-Birkhoff-Witt Theorem,
which states that the associated graded algebra of the 
universal enveloping algebra of a Lie algebra is its homogeneous
version, namely, a polynomial ring.

\begin{remark}{\em
The reader is cautioned that
authors use the adjective {\em PBW}
in slightly different ways.
For example,
in Braverman-Gaitsgory~\cite{BG}
and also in~\cite{quad}, 
the homogeneous version of a filtered
{\em quadratic} algebra is defined 
by
projecting every generating relation
onto its degree $2$ part, instead of its
highest homogeneous part.  
This merely means that filtered
relations of degree $1$ must be considered
separately in PBW theorems there.
}\end{remark}

\subsection*{Group twisted Koszul algebras}
Let $S$ be a finitely generated graded Koszul $k$-algebra. 
Then $S$ is a quadratic
algebra generated by some finite dimensional $k$-vector
space $V$ (in degree $1$) 
with generating quadratic relations 
$R$, some $k$-subspace of $V\ot V$: 
$$S=T_k(V)/(R)\, . $$
Let $G$ be a finite group acting by graded automorphisms on $S$.
This is equivalent to $G$ acting linearly 
on $V$ with the relations $R$ preserved set-wise.
We denote the action of $g$ in $G$ on $v$ in $V$ by $^gv$ in $ V$.
The {\em skew group algebra} (or semidirect product algebra)
$S\rtimes G$ (also written $S\# G$) is the $k$-algebra generated
by the group algebra $kG$ and the vector space $V$
subject to the relations given by $R$
together with the relations
$gv-\, ^gvg$ for $g$ in $G$ and $v$ in $V$.
We identify
$S\rtimes G$ with a filtered algebra
over the ring $\kk=kG$ generated by $U=kG\ot V\ot kG$:
$$
S\rtimes G \cong T_{kG}(kG\ot V\ot kG)/(R\cup R')
$$
as graded algebras, 
where elements of $G$ have degree 0 and
elements of $V$ have degree 1,
and where 
\begin{equation}\label{eqn:R-prime}
R'=\Span_k \{g\otimes v\otimes 1 -  1\otimes \, ^gv\otimes g: 
v\in V,\ g\in G \}\subset kG \ot V\ot kG\, .
\end{equation} 
Here we identify $R\subset V\ot V$ with a subspace of $$k\ot V\ot k\ot V\ot k
\subset kG\ot V\ot kG\ot V\ot kG\cong
(kG\ot V\ot kG)\ot _{kG} (kG\ot V\ot kG). $$

\subsection*{PBW deformations of 
group twisted Koszul algebras}
Now 
suppose $\mathcal{H}$ is a PBW deformation 
of $S\rtimes G$.
Then $\mathcal{H}$ is 
generated by $kG$ and $V$
subject to nonhomogeneous relations
of degrees $2$ and $1$ of the form
$$
\begin{aligned}
P&=\{ r-\alpha(r)-\beta(r): r\in R \} \ \ \ \mbox{ and } \\
P'&=\{ r'-\lambda(r'): r'\in R' \}
\end{aligned}
$$%
for some $k$-linear parameter functions
$$\alpha:R\rightarrow  V\otimes kG, \ 
\beta: R\rightarrow  kG, \ 
\lambda:  R'\rightarrow  kG \, . $$
 That is,  $\mathcal{H}$
can be realized as the quotient 
$$
\mathcal{H}=T_{kG}(kG\ot V\ot kG)/ ( P \cup P').
$$
Note we may assume that
$\alpha$ takes values in $V\ot kG\cong k\ot V\ot kG$,
rather than more generally in $kG\ot V\ot kG$,
without changing the $k$-span of $P\cup P'$, since the relations $P'$
allow us to replace elements in $kG\ot V\ot kG$ with those in $k\ot V\ot kG$.

In our main theorem below, we determine which 
such quotients define PBW deformations of $S\rtimes G$.
We first need some notation for
decomposing
any functions $\alpha, \beta, \lambda$ as above.
We identify $\lambda: R'\rightarrow kG$ with the
function 
(of the same name)
$\lambda: kG\otimes V\rightarrow kG$
mapping
$g\ot v$ to $\lambda(g\ot v\ot 1 - 1\ot \, {}^gv\ot g)$
for all $g$ in $G$ and $v$ in $V$.
We write 
$$\alpha(r)=\sum_{g\in G} \alpha_g(r) g ,  \ \ \  
\beta(r) =\sum_{g\in G} \beta_g(r)g, \ \ \
\lambda(h\ot v) = \sum_{g\in G} \lambda_g(h\ot v) g
$$ for functions
$\alpha_g:R\rightarrow V$, 
$\beta_g:R\rightarrow k$, $\lambda_g : kG\ot V\rightarrow k$ (identifying $V$ with $V\ot k$
in $V\ot kG$). 
 Write
$\lambda(g\ot - ):V\rightarrow kG$ for the  function
induced from $\lambda$ by fixing $g$ in $G$.
Let $m:kG\ot kG\rightarrow kG$ be multiplicaton on $kG$
and let $\sigma: kG\ot V\rightarrow V\ot kG$ be 
the twist map given
by
\[
   \sigma ( g\ot v) = {}^gv\ot g
\quad\text{ for } g\text{ in }G, \ v\text{ in }V.
\]

For the statement of the theorem, we 
set

\begin{equation}\label{eqn:Habc}
\begin{aligned}
\chabl = T_{kG}(kG\ot V \ot kG)  
\ /\ \big(r-\alpha(r)-\beta(r),\ r' - \lambda(r'):r\in R, \ r' \in R' \big)
\end{aligned}
\end{equation} 

\medskip

\noindent
for linear parameter functions
$\alpha: R\rightarrow V\ot kG$,
$\beta:   R\rightarrow  kG$,
$\lambda:  R'\rightarrow  kG$ 
and for $R$ the space of Koszul relations and $R'$ the space of
group action relations~(\ref{eqn:R-prime}). 
The functions $\alpha$ and $\beta$ are extended uniquely to 
right $kG$-module homomorphisms from $R\ot kG$
to $V\ot kG$ and $kG$, respectively. 

\newpage

\begin{thm}\label{thm:main}
Let $G$ be a finite group and let $V$ be a $kG$-module.
Let $S=T_k(V)/(R)$ be a Koszul algebra
with subspace $R\subset V\ot V$ closed under the action of $G$.
Then a filtered algebra $\mathcal{H}$
is a PBW deformation of $S\rtimes G$ if and only if
\[
\mathcal{H}\cong \chabl 
\]
for some linear parameter functions
$\alpha:R\rightarrow  V\otimes kG, \ 
\beta: R\rightarrow  kG, \ 
\lambda: kG\ot V \rightarrow  kG $ 
satisfying
\begin{itemize}
\item[(1)]
$ \ 1\ot\lambda-\lambda (m\ot 1)+(\lambda\ot 1) (1\ot\sigma) = 0 $,
\item[(2)]
$ \ \lambda (\lambda\ot 1)-\lambda(1\ot \alpha)=(1\ot\beta)-(\beta\ot 1)
(1\ot\sigma)(\sigma\ot 1)$,
\item[(3)]
$ \ (1\ot\alpha)-(\alpha\ot 1)(1\ot\sigma)(\sigma \ot 1)
=\lambda\ot 1+(1\ot \lambda)(\sigma\ot 1)$,
\item[(4)]
$ \ \alpha  ((1\ot\sigma)(\alpha\ot 1)- 1\ot\alpha) + \sum_{g\in G} \alpha_g\ot \lambda(g\ot -)
=1\ot \beta-\beta\ot 1$,
\item[(5)]\rule{0ex}{2
ex}
$  \beta ((1\ot \sigma)(\alpha\ot 1) - 1\ot\alpha)= - \lambda (\beta\ot 1) $,
\item[(6)]
  $ \ \alpha\ot 1-1\ot\alpha=0$,
\end{itemize}
upon projection of images of the maps to $S\rtimes G$.
Here,
the map in $(1)$ is defined on $kG\ot kG\ot V$, 
the maps in $(2)$ and $(3)$ are defined on $kG\ot R$, 
the  map in $ (6)$ is defined on $(V\ot R)\cap (R\ot V)\subset V\ot V\ot V$,
and
 $(6)$ implies that the maps in $(4)$ and $(5)$ are
also defined on $(V\ot R)\cap (R\ot V)$. 
\end{thm}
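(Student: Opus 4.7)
The plan is to obtain Theorem~\ref{thm:main} as a corollary of Theorem~\ref{thm:hom}, the forthcoming homological version that will characterize PBW deformations of $S\rtimes G$ via Braverman-Gaitsgory-style conditions on the twisted product resolution of Guccione-Guccione-Valqui adapted in~\cite{quad}. A key feature of that resolution is that it partitions the relevant $2$-cochain module cleanly into a Koszul piece, on which the pair $(\alpha,\beta)$ records an infinitesimal deformation of the quadratic relations $R$, and a group piece, on which $\lambda$ records an infinitesimal deformation of the cross relations $R'$. The strategy is therefore to unpack the chain-level cocycle/obstruction identities produced by Theorem~\ref{thm:hom} into the explicit tensor equations (1)--(6).

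First, I would justify the reduction to the form $\chabl$. Any PBW deformation of $S\rtimes G$ can be presented with nonhomogeneous relations lifting $R\cup R'$ modulo lower filtered degree, so $P\cup P'$ automatically has the indicated shape after choosing a lift. The assertion that $\alpha$ may be taken to land in $V\ot kG$ rather than in $kG\ot V\ot kG$ follows by using the relations in $P'$ to rewrite $h\ot v\ot g$ as $1\ot {}^h v\ot hg$ up to an element of $kG$, at the cost of adjusting $\beta$ and $\lambda$; this requires no PBW hypothesis.

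The bulk of the work is translating the homological conditions into (1)--(6) by enumerating the three classes of degree-$3$ overlap ambiguities visible on the twisted resolution. Class~(i) lives on $kG\ot kG\ot V$ and encodes the associativity of pulling a group element past a generator; its vanishing is precisely (1) and is self-contained in $\lambda$. Class~(ii) lives on $kG\ot R$ and encodes the compatibility of pulling a group element across a Koszul relation; its filtered degree $2$ and degree $1$ components give conditions (3) and (2) respectively, both mixing $\lambda$ with $(\alpha,\beta)$ via the twist $\sigma$. Class~(iii) lives on the Koszul triple overlap $(V\ot R)\cap(R\ot V)$; its filtered degree $2$, $1$, and $0$ components give (6), (4), and (5), where the new summand $\sum_{g\in G}\alpha_g\ot\lambda(g\ot -)$ in (4) is the contribution beyond \cite[Theorem~5.4]{quad} and arises precisely because lifted Koszul relations must be commuted across lifted group relations. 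Condition (6), stating that $\alpha\ot 1$ and $1\ot\alpha$ agree on the triple overlap after projection to $S\rtimes G$, is exactly what guarantees that the inner expressions in (4) and (5) land in $R\ot kG$ so that the right-$kG$-module extensions of $\alpha$ and $\beta$ can be applied to them.

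The main obstacle I anticipate is the slot-by-slot bookkeeping around the twist $\sigma$ and the right-$kG$-module extensions of $\alpha$ and $\beta$: each component of each overlap class must be matched against a single cochain component with group elements and vectors threaded through the correct tensor factors, and the projection of images into $S\rtimes G$ must be invoked at precisely the points where an equality otherwise holds only modulo already-imposed defining relations. Once these matchings are pinned down, the passage from the chain-level identities furnished by Theorem~\ref{thm:hom} to the stated equations (1)--(6) reduces to a direct, if lengthy, comparison of tensor expressions in the free module spanned by the relations.
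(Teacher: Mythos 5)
Your proposal follows essentially the same route as the paper: Theorem~\ref{thm:main} is deduced from Theorem~\ref{thm:hom} by evaluating the homological conditions (a), (b), (c) on the components $X_{2,1}$, $X_{1,2}$, $X_{0,3}$ of the twisted product resolution, which are exactly your three ``overlap'' spaces $kG\ot kG\ot V$, $kG\ot R$, and $(V\ot R)\cap(R\ot V)$, with the degree bookkeeping and the role of Condition~(6) in making (4) and (5) well defined matching the paper's argument. The only difference is cosmetic (your Diamond-Lemma-style phrasing of the decomposition), and the remaining work is the same slot-by-slot computation with the chain maps $\phi,\psi$ that the paper carries out.
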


We will prove the theorem in Section~\ref{sec:hom2} as a corollary
of Theorem~\ref{thm:hom}, after first developing 
some homological algebra in Sections~\ref{sec:def} and~\ref{sec:hom}.

The theorem above includes
the case of filtered quadratic algebras
defined over the ring $kG$ instead of the field $k$.
Such algebras preserve the action of $kG$
and 
correspond to the case
$\lambda = 0$ in the theorem above.
We recover a result from~\cite{quad} 
which we rephrase below to highlight the role of the twisting map $\sigma$. 
The theorem was developed to provide tools particularly in the case
that $kG$ is not semsimple.

Note that the action of $G$ on itself
by conjugation induces an action
on the parameter functions $\alpha$ and $\beta$
(with $(^g\alpha)(r)=\, ^g(\alpha(\, ^{g^{-1}}r))$
as usual and $^g(v\otimes h)=\, ^gv \otimes ghg^{-1}$
for $r$ in $R$, $g$ in $G$, and $v$ in $V$).

\begin{thm}\cite[Theorem 5.4]{quad}
Let $G$ be a finite group and let $V$ be a $kG$-module.
Let $S=T_k(V)/(R)$ be a Koszul algebra
with subspace $R\subset V\ot V$ closed under the action of $G$.
Then a filtered quadratic algebra 
$\mathcal{H}$ is a PBW deformation of $S\rtimes G$ preserving 
the action of $G$
if and only if 
\[
\mathcal{H}\cong \mathcal{H}_{0,\alpha,\beta} 
\]
for some $G$-invariant linear parameter functions
$\alpha:R\rightarrow V\otimes kG, \ 
\beta: R\rightarrow  kG$ 
satisfying, upon projection to $S\rtimes G$,
\begin{itemize}
\item[(i)]
  $ \ \alpha\ot 1-1\ot\alpha=0$ ,
\item[(ii)]
$ \ \alpha  ( (1\ot \sigma)(\alpha\ot 1)- 1\ot\alpha) 
=1\ot \beta-\beta\ot 1$,
\item[(iii)]\rule{0ex}{2
ex}
$  \beta ((1\ot \sigma)(\alpha\ot 1) - 1\ot\alpha)= 0$.
\end{itemize}
Here, the map in (i) is defined on $(V\ot R)\cap (R\ot V)$, and
(i) implies that the maps in (ii) and (iii) are also defined
on $(V\ot R)\cap (R\ot V)$. 
\end{thm}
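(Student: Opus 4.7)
The plan is to deduce this theorem as a direct specialization of Theorem~\ref{thm:main} by setting $\lambda=0$. A PBW deformation $\mathcal{H}$ of $S\rtimes G$ preserves the $G$-action on $S$ exactly when the skew relations $gv-{}^gvg$ (for $g\in G$, $v\in V$) remain undeformed in $\mathcal{H}$, i.e., precisely when one may take $\lambda\equiv 0$ in the presentation~(\ref{eqn:Habc}). Thus the proof reduces to checking that, when $\lambda=0$, conditions (1)--(6) of Theorem~\ref{thm:main} are equivalent to conditions (i)--(iii) together with $G$-invariance of $\alpha$ and $\beta$.

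The first steps are immediate: with $\lambda=0$, condition (1) collapses to $0=0$; the $\lambda$-terms $\sum_{g\in G}\alpha_g\otimes\lambda(g\otimes -)$ in (4) and $-\lambda(\beta\otimes 1)$ in (5) vanish, so (4) and (5) reduce verbatim to (ii) and (iii); and (6) is already (i). The content lies in conditions (2) and (3), which under $\lambda=0$ become
\[
(1\otimes\beta)=(\beta\otimes 1)(1\otimes\sigma)(\sigma\otimes 1)
\quad\text{and}\quad
(1\otimes\alpha)=(\alpha\otimes 1)(1\otimes\sigma)(\sigma\otimes 1)
\]
as maps on $kG\otimes R$. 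I would unpack these on a generic element $g\otimes r$: using the definition of $\sigma$, the right-hand side of the first rewrites as $\beta({}^g r)\otimes g$ while the left is $g\otimes\beta(r)$, so the projection to $S\rtimes G$ forces $g\beta(r)=\beta({}^g r)g$, which is exactly $G$-equivariance of $\beta$. The analogous computation for (3) yields $G$-equivariance of $\alpha$.

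The main obstacle is this last translation: carefully passing from the abstract compositions in (2) and (3) to an equivariance statement after projection to $S\rtimes G$, tracking how the twist $\sigma$ interleaves the $G$-action with the values of $\alpha$ and $\beta$. Once this is in hand, both directions of the equivalence drop out of Theorem~\ref{thm:main}. If $\mathcal{H}$ preserves the $G$-action, it admits a presentation with $\lambda=0$ and the conditions of Theorem~\ref{thm:main} collapse to (i)--(iii) plus $G$-invariance of $\alpha,\beta$. Conversely, any $G$-invariant $\alpha,\beta$ satisfying (i)--(iii) yield a triple $(0,\alpha,\beta)$ fulfilling (1)--(6), so Theorem~\ref{thm:main} produces the PBW deformation $\mathcal{H}_{0,\alpha,\beta}\cong\mathcal{H}$, which manifestly preserves the $G$-action since $\lambda=0$.
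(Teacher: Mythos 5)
Your proposal is correct and follows essentially the same route as the paper: the paper's own proof also specializes Theorem~\ref{thm:main} to $\lambda=0$, observing that Condition~(1) becomes vacuous, Conditions~(2) and~(3) are equivalent to $G$-invariance of $\beta$ and $\alpha$, and Conditions~(4), (5), (6) become (ii), (iii), (i). Your explicit unpacking of (2) and (3) on $g\otimes r$ via the twist $\sigma$ (yielding $g\beta(r)=\beta({}^gr)g$ and $g\alpha(r)=\alpha({}^gr)g$) correctly supplies the detail the paper leaves implicit.
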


\begin{proof}
The additional hypothesis, that the action of $G$ is 
preserved in the deformation, is equivalent to setting $\lambda =0$
in Theorem~\ref{thm:main}. In this case, 
Condition~(1) of Theorem~\ref{thm:main} is vacuous,
and Conditions~(2) and (3) are equivalent to $G$-invariance of $\alpha$
and $\beta$. 
Conditions (4), (5), (6) become Conditions (ii), (iii), (i) here, 
respectively.
\end{proof}

\begin{remark}
{\em The conditions of the above theorems generalize those of Braverman and 
Gaitsgory~\cite[Lemma~3.3]{BG} 
from Koszul algebras $S$ to skew group algebras $S\rtimes G$.
Their Condition (I) corresponds to our 
Conditions
(1), (2), and (3) in Theorem~\ref{thm:main};
these conditions limit the possible relations of filtered degree~1.
The nonmodular case can be proven using the theory of Koszul rings
over the semisimple ring $kG$, as in \cite{EG}.
In the modular case, when char$(k)$ divides $|G|$,
we found in~\cite{quad} that
more complicated homological information
is required to obtain PBW conditions
using this approach.
}\end{remark} 

\section{Deformations}\label{sec:def}

In this section, we recall the general theory of deformations and Hochschild 
cohomology that we will need and show how it applies to the 
algebras $\chabl$ of Theorem~\ref{thm:main}. 

Recall that for any $k$-algebra $A$, 
the   Hochschild cohomology of an $A$-bimodule $M$ in degree $n$ is
$$
  \HH^n (A,M) = \Ext^n_{A^e}(A,M),
$$
where $A^e=A\ot A^{op}$ is the enveloping algebra of $A$, 
and the bimodule structure of $M$ defines it as an $A^e$-module.
In the case that $M=A$, we abbreviate $\HH^n(A)= \HH^n(A,A)$.

\subsection*{Bar and reduced bar resolutions}
Hochschild cohomology  can be defined using
the {\em bar resolution}, that is, the free resolution
of the $A^e$-module $A$ given as: 
\[
 \cdots \stackrel{\delta_3}{\longrightarrow} A\ot A\ot A\ot A
  \stackrel{\delta_2}{\longrightarrow} A\ot A\ot A
  \stackrel{\delta_1}{\longrightarrow} A\ot A \stackrel{\delta_0}{\longrightarrow}
    A\rightarrow 0 , 
\]
where 
\begin{equation}\label{eqn:delta}
   \delta_n (a_0\ot \cdots\ot a_{n+1}) = \sum_{i=0}^n (-1)^i a_0\ot \cdots
      \ot a_i a_{i+1}\ot \cdots\ot a_{n+1}
\end{equation} 
for all $n\geq 0$ and $a_0,\ldots,a_{n+1}\in A$.
If $A$ is an $\NN$-graded algebra, then each tensor power of $A$ is canonically a
graded $A$-bimodule. The 
Hochschild cohomology of $A$ inherits this grading from
the bar resolution and thus is bigraded: 
$\HH^i(A)=\bigoplus_j \HH^{i,j}(A)$
with 
$\HH^{i,j}(A)$ the subspace consisting of
homogeneous elements of graded degree $j$ as maps. 
For our arguments, we will need to use the {\em reduced bar resolution},
which replaces the $A^e$-module $A^{\ot (n+2)}$, for each $n$, by its vector
space quotient $A\ot (\overline{A})^{\ot n}\ot A$, where
$\overline{A} = A/k$ (the vector space quotient by all scalar multiples of the 
multiplicative identity $1_A$ in $A$). 
The differentials on the bar resolution factor through these quotients
to define differentials for the reduced bar resolution. 

\subsection*{Deformations}
A {\em deformation of $A$ over $k[t]$} is an associative
$k[t]$-algebra $A_t$ with underlying vector space $A[t]$
such that $A_t|_{t=0}\cong A$ as  algebras.
The product $*$ on a deformation $A_t$ of $A$ is determined by its
values on pairs of elements of $A$, 
\begin{equation}\label{star-formula}
    a_1 * a_2 = a_1a_2 + \mu_1(a_1\ot a_2) t + \mu_2(a_1\ot a_2) t^2 + \cdots
\end{equation}
where $a_1a_2$ is the product of $a_1$ and $a_2$ in $A$ and 
each $\mu_j: A\ot A \rightarrow A$ 
is some $k$-linear map (called the {\em $j$-th multiplication map}) 
extended to be linear over $k[t]$.
(We require that only finitely many terms in the above
expansion 
for each pair $a_1, a_2$ are nonzero.)
We may (and do) assume that $1_A$  
is the multiplicative identity with respect to the multiplication
$*$ of $A_t$.
(Each deformation 
is equivalent to one with $1_A$ serving as the multiplicative identity;
see~\cite[p.\ 43]{GerstenhaberSchack83}.)

We identify the  maps $\mu_i$ with 2-cochains
on the reduced bar resolution
using the 
canonical isomorphism 
$
  \Hom_{k}(\overline{A}\ot  \overline{A}, A)\cong 
\Hom_{A^e}(A\ot \overline{A} \ot \overline{A}\ot  A, A).
$
 (Our assumptions
imply that the value of $\mu_i$ is 0 if either argument is the
multiplicative identity of $A$.)
We will use the same notation for elements of $A$ and $\overline{A}$
when no confusion will arise. 

Associativity of the multiplication $*$ implies
certain conditions on the maps $\mu_i$ which
are elegantly phrased in~\cite{Gerstenhaber64} in terms of the differential
$\delta$ and the 
Gerstenhaber bracket $[ \  , \  ]$, as we explain next. 
The {\em Gerstenhaber  bracket} for  2-cochains $\xi, \nu$
on the (reduced) bar resolution 
is the 3-cochain defined by
\begin{equation}\label{eqn:G-brack}
\begin{aligned}
{[} \xi, \nu {]} (a_1\ot a_2\ot a_3) 
\ \ = \ \
\xi(\nu(a_1\ot a_2)\ot a_3)
   -\xi(a_1\ot \nu(a_2\ot a_3)) \\
 \ \ \ + \nu(\xi(a_1\ot a_2)\ot a_3)
  -\nu(a_1\ot \xi(a_2\ot a_3))
\end{aligned}
\end{equation}
for all $a_1, a_2, a_3\in A$.
See~\cite{Gerstenhaber63} for the 
definition of Gerstenhaber bracket in other degrees.

\subsection*{Obstructions}
If $A_t$ is a deformation of a $k$-algebra $A$ over $k[t]$, 
associativity of multiplication $*$ implies in particular  
(see \cite{Gerstenhaber64}) 
that
\begin{align}
\label{obst0}
\delta_3^*(\mu_1)&=0
&\text{ ($\mu_1$ is a Hochschild 2-cocycle),}&\\
\label{obst1}
\delta_3^*(\mu_2)&=\tfrac{1}{2}[\mu_1, \mu_1]
&\text{ (the first obstruction vanishes)},& \text{ and }\\
\label{obst2}
\delta_3^*(\mu_3)&=[\mu_1, \mu_2]
&\text{ (the second obstruction vanishes).}&
\end{align}
Associativity of the multiplication $*$
also implies that higher degree ``obstructions''
vanish, i.e., it forces necessary 
conditions on all the $\mu_j$. 
We will only need to look closely at the above beginning obstructions:
Higher degree obstructions relevant to our setting
will automatically vanish because
of the special nature of Koszul algebras
(see the proof of Theorem~\ref{thm:hom}).

\subsection*{Graded deformations}
Assume  that the $k$-algebra  $A$ is $\NN$-graded.
Extend the grading
on $A$ to $A[t]$ by setting $\deg (t) = 1$.
A {\em graded deformation of $A$ over $k[t]$} 
is a deformation of $A$ over $k[t]$ that is graded, i.e.,  
each map $\mu_j:A\otimes A \rightarrow A$ is homogeneous of degree $-j$.
An {\em $i$-th level graded deformation of $A$} is a deformation
over $k[t]/(t^{i+1})$, i.e., an algebra $A_i$ with underlying
vector space $A[t]/(t^{i+1})$ and multiplication as 
in (\ref{star-formula}) in which terms involving powers of $t$ greater 
than $i$ are 0.
An $i$-th level graded deformation $A_i$ of $A$ {\em lifts} (or {\em extends})
to an $(i+1)$-st level graded deformation $A_{i+1}$
if the $j$-th multiplication maps of $A_i$
and $A_{i+1}$ coincide for all $j\leq i$. 

We next point out that the algebra $\chabl$
defined in (\ref{eqn:Habc})
gives rise to a graded deformation
of $S\rtimes G$ in case it has the PBW property. 


\begin{prop}\label{prop:PBW}
If $\chabl$ is a PBW deformation of $S\rtimes G$,
then $\chabl$ is the fiber of a deformation of 
$A=S\rtimes G$:
$$\chabl\cong A_t\big|_{t=1}$$
as filtered algebras, for $A_t$ a graded
deformation of $S\rtimes G$ over $k[t]$.
\end{prop}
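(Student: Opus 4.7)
The plan is to realize $A_t$ as the uniform homogenization of the defining relations of $\chabl$ by a central indeterminate $t$ of degree one, then use the PBW hypothesis to identify its underlying $k[t]$-module with $A[t]$ where $A = S\rtimes G$. Concretely, placing $kG$ in degree $0$ and both $V$ and $t$ in degree $1$, the elements
\[r - t\,\alpha(r) - t^2\,\beta(r)\quad(r\in R)\qquad\text{and}\qquad r' - t\,\lambda(r')\quad(r'\in R')\]
are homogeneous in $T_{kG}(kG\otimes V\otimes kG)[t]$ of degrees $2$ and $1$ respectively, since $\alpha$ and $\lambda$ lower degree by one and $\beta$ lowers degree by two. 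Let $A_t$ be the graded $k[t]$-algebra obtained as the corresponding quotient; specializing at $t=0$ yields the defining relations of $S\rtimes G$, and at $t=1$ those of $\chabl$, giving natural surjections $A_t/(t)\twoheadrightarrow S\rtimes G$ (graded) and $A_t/(t-1)\twoheadrightarrow\chabl$ (filtered).

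The main step is to promote these surjections to isomorphisms and verify that $A_t$ is free over $k[t]$ of the right size. My approach is to identify $A_t$ with the Rees algebra $\text{Rees}(\chabl) = \bigoplus_{i\geq 0} F^i\chabl\cdot t^i$ of the natural filtration $F^{\bullet}$ on $\chabl$: the homogenization procedure defining $A_t$ is exactly the Rees construction in disguise, there is a natural surjection of graded $k[t]$-algebras $A_t\twoheadrightarrow\text{Rees}(\chabl)$, and $\text{Rees}(\chabl)$ is automatically $k[t]$-free since $k$ is a field. Comparing fibers at $t=1$, where both sides recover $\chabl$, upgrades the surjection to an isomorphism. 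Equivalently, one may lift a homogeneous PBW basis of $S\rtimes G$ to elements of $A_t$ of the same degree and use the PBW property of $\chabl$ to verify that these lifts form a $k[t]$-basis of $A_t$. Either argument yields an isomorphism $A_t\cong A[t]$ of graded $k[t]$-modules with $A_t/(t)\cong S\rtimes G$ (this is exactly where PBW enters) and $A_t/(t-1)\cong\chabl$ as filtered algebras.

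Under this identification, the product in $A_t$ restricts to a $k$-bilinear operation on $A\otimes A$ of the form
\[a_1*a_2 \;=\; a_1 a_2 + \mu_1(a_1\otimes a_2)\,t + \mu_2(a_1\otimes a_2)\,t^2 + \cdots,\]
where homogeneity in the $A_t$-grading forces each $\mu_j:A\otimes A\to A$ to have degree $-j$, the sum is finite for each $a_1,a_2$ by boundedness of their degrees, and associativity of $*$ is inherited from that of $A_t$. This exhibits $A_t$ as a graded deformation of $S\rtimes G$ over $k[t]$ with fiber $A_t|_{t=1}\cong\chabl$ as filtered algebras. The main obstacle is the $k[t]$-freeness together with the fiber identification $A_t/(t)\cong S\rtimes G$, which is exactly the content of the PBW hypothesis; absent PBW the fiber at $t=0$ could be strictly smaller than $S\rtimes G$ and $A_t$ would fail to qualify as a deformation of $A$.
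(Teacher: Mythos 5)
Your construction of $A_t$ by homogenizing the defining relations is the same as the paper's, and your endgame (a graded $k[t]$-module identification $A_t\cong A[t]$, from which the maps $\mu_j$ of degree $-j$, the finiteness of the sums, associativity, and the identifications of the fibers at $t=0$ and $t=1$ all follow) is fine. The gap is in your main step. The natural graded surjection $A_t\twoheadrightarrow \mathrm{Rees}(\chabl)$ becomes an isomorphism after inverting $t$, so its kernel $K$ consists of elements killed by powers of $t$; but then $x=(1-t^N)x=(1-t)(1+t+\cdots+t^{N-1})x$ shows $K=(t-1)K$, i.e.\ $K$ vanishes in the fiber at $t=1$. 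Thus ``comparing fibers at $t=1$'' can never detect the kernel: a graded surjection that is an isomorphism modulo $(t-1)$ need not be injective (e.g.\ $k[t]\oplus k\twoheadrightarrow k[t]$ with $t$ acting by $0$ on the summand $k$). The possible failure of $A_t\cong\mathrm{Rees}(\chabl)$ is exactly the possible presence of $t$-torsion in $A_t$, and that is invisible at $t=1$. Your closing remark has the same confusion in reverse: the fiber of $A_t$ at $t=0$ is \emph{always} $S\rtimes G$, PBW or not, since setting $t=0$ in the homogenized relations returns precisely $R\cup R'$; what fails without PBW is $k[t]$-freeness of $A_t$ (equivalently, torsion appears and the $t=1$ fiber $\chabl$ is too small).

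The fix is a degreewise dimension count using \emph{both} fibers, which is exactly the paper's ``degree argument'' and is essentially your alternative sketch (``lift a homogeneous basis''), so that sketch should be promoted to the actual proof. Since $A_t/(t)\cong S\rtimes G$ and the grading is bounded below with finite-dimensional components, lifts of a homogeneous $k$-basis of $S\rtimes G$ generate $A_t$ over $k[t]$ degree by degree, giving $\dim_k(A_t)_n\le\sum_{i\le n}\dim_k(S\rtimes G)_i$; on the other hand the surjection $(A_t)_n\twoheadrightarrow F^n\chabl$ (equivalently onto the degree-$n$ part of $\mathrm{Rees}(\chabl)$) together with the PBW hypothesis gives the matching lower bound. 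Equality forces the lifts to be a $k[t]$-basis, so $A_t\cong A[t]$ as graded $k[t]$-modules and, if you wish, $A_t\cong\mathrm{Rees}(\chabl)$; the paper carries out the same identification by lifting a filtered vector-space splitting $S\rtimes G\to\chabl$ through $T_{kG}(kG\ot V\ot kG)$ and invoking this degree comparison, then reads off $\mu_1,\mu_2$ in terms of $\lambda,\alpha,\beta$ as in the symmetric-algebra case.
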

\begin{proof}
We define the algebra $A_t$ by 
$$
A_t=
T_{kG}(kG\ot V\ot kG)[t] / 
(r-\alpha(r)t-\beta(r)t^2, \ r'-\lambda(r')t:
r\in R, r'\in R').
$$
Since $\chabl$ has the PBW property,
$A_t$ and $(S\rtimes G)[t]$
are isomorphic as $k[t]$-modules:
Define a $k$-linear map from $S\rtimes G$ to $T_{kG}(kG\ot V\ot kG)$ so
that composition with the quotient map onto $\chabl$ is an isomorphism
of filtered vector spaces, and extend to a $k[t]$-module homomorphism
from $S\rtimes G [t]$ to $T_{kG}(kG\ot V\ot kG)[t]$.
The composition of this map with the quotient map onto $A_t$ 
can be seen to be an isomorphism of vector spaces by a degree argument.  
The rest of the proof is a straightforward generalization of
\cite[Proposition~6.5]{ueber}, which is the case $S=S(V)$ and $\alpha = 0$. 
Here, $r$ replaces $v\ot w - w\ot v$ and 
the first and second multiplication maps $\mu_1$ and $\mu_2$
satisfy 
\[
\begin{aligned} 
 & \lambda(g\ot v\ot 1 - 1\ot {}^gv\ot g)=\mu_1( g\ot v)-\mu_1(  {}^gv\ot g ),\\
 & \alpha(r) = \mu_1(r), \ \ \ 
   \mbox{ and } \ \ \ \beta(r)=\mu_2( r)
\end{aligned}
\]
for all $g$ in $G$, $v$ in $V$, and $r$ in $R$.
\end{proof}

\section{Hochschild cohomology of group twisted Koszul algebras}\label{sec:hom}
We will look more closely at the Hochschild 2-cocycle condition (\ref{obst0})
and the obstructions (\ref{obst1})
and (\ref{obst2}) in the case that $A$ is a group twisted
Koszul algebra $S\rtimes G$.
A convenient resolution for this purpose was introduced by Guccione, Guccione, 
and Valqui~\cite{GGV}.
We now recall from \cite{quad} a  modified version of this construction.

\subsection*{Twisted product resolution}
Again, let $S$ be a Koszul algebra with finite dimensional
generating
$k$-vector space $V$ and subspace of relations $R\subset V\otimes V$:
$$
S=T_k(V)/(R)\, .$$
Since $S$ is Koszul, the complex
$$\cdots\rightarrow
{K}_3\overset{d_3}{\longrightarrow}
{K}_2\overset{d_2}\longrightarrow
{K}_1  \overset{d_1}\longrightarrow
{K}_0\overset{d_0}{\longrightarrow}
S\rightarrow 0
$$
is a free $S^e$-resolution of $S$, where $K_n = S\ot \tilde{K}_n\ot S$ with 
$\tilde{K}_0=k$, $\tilde{K}_1 = V$, and 
$$\tilde{K}_n=
 \bigcap_{j=0}^{n-2}(V^{\otimes j}\otimes R\otimes V^{\otimes(n-2-j)}),
\quad n\geq 2.
$$ 
Identify $K_0$ with $S\ot S$. 
The differential is restricted from that of the (reduced) bar resolution of $S$,
defined in (\ref{eqn:delta}),
so that $d_n=\delta_n|_{{K}_n}$.

Let $G$ be finite group acting by graded automorphisms on $S$ and 
set $A=S\rtimes G$.
The {\em twisted product resolution} $X_{\DOT}$ 
of $A$ as an $A^e$-module
is  the total complex 
of the double complex $X_{\DOT,\DOT}$,
where
\begin{equation}\label{xij2}
X_{i,j} = A\ot (\overline{kG})^{\ot i} \ot \tilde{K}_j\ot A ,
\end{equation}
and $A^e$ acts by left and right multiplication on the outermost
tensor factors $A$:
$$
\begin{small}
\xymatrixcolsep{9ex}
\xymatrixrowsep{9ex}
\xymatrix{
&
X_{0,3}\ar[d]^{d^v_{0,3}} &
\vdots \ar[d] &
\vdots \ar[d] \\
&
X_{0, 2} \ar[d]^{d^v_{0,2}} &
X_{1, 2} \ar[l]^{d^h_{1, 2}} \ar[d]^{d^v_{1,2}}  &
X_{2, 2} \ar[l]^{d^h_{2, 2}}  \ar[d]^{d^v_{2,2}}  &
\ar[l]\cdots \\
&
X_{0, 1} \ar[d]^{d^v_{0, 1}} &
X_{1, 1} \ar[l]^{d^h_{1,1}}   \ar[d]^{d^v_{1, 1}} &
X_{2, 1} \ar[l]^{d^h_{2,1}} \ar[d]^{d^v_{2, 1}} &
\ar[l] \cdots \\
&
X_{0, 0}  &
X_{1, 0} \ar[l]^{d^h_{1,0}} &
X_{2, 0} \ar[l]^{d^h_{2,0}} &
\ar[l] \cdots
}
\end{small}
$$
To define the differentials, we first identify each $X_{i,j}$ with a
tensor product over $A$ (see~\cite[Section~4]{quad}),
\begin{equation}\label{eqn:xij1}
  X_{i,j}\cong  (A\ot (\overline{kG})^{\ot i}\ot kG)\ot _A (S\ot\tilde{K_j}\ot A),
\end{equation}
where the right action of $A$ on $A\ot (\overline{kG})^{\ot i}\ot kG$ is given by
\[
   (a\ot g_1\ot \cdots \ot g_i\ot g_{i+1}) sh =
   a ( {}^{g_1\cdots g_{i+1}} s ) \ot g_1\ot \cdots
   \ot g_i\ot g_{i+1} h 
\]
and
the left action of $A$ on $S\ot\tilde{K}_j\ot A$ is given by
\[
   sh (s'\ot x\ot a) = s ( {}^h s')\ot {}^hx \ot ha 
\]
for all $g_1,\ldots, g_{i+1}, h$ in $G$, 
$s,s'$ in $S$, and $a$ in $A$.
(We have suppressed tensor symbols in writing elements
of $A$ to avoid confusion with tensor
products defining the resolution.)
The horizontal and vertical differentials 
on the bicomplex $X_{\DOT, \DOT}$, given as a tensor product
over $A$ via (\ref{eqn:xij1}), are then defined by
$
d_{i,j}^h=d_i\ot 1
$
and
$d^v_{i,j}=(-1)^i \ot d_j$,
respectively, where the notation $d$ is used for both the differential
on the reduced bar resolution of $kG$ (induced to an $A\ot (kG)^{op}$-resolution) 
and on the Koszul resolution of $S$ (induced to an $S\ot A^{op}$-resolution).
Setting $X_n = \oplus_{i+j=n} X_{i,j}$ for each $n\geq 0$
yields the total complex $X_{\DOT}$: 
\begin{equation}\label{resolution-X}
  \cdots\rightarrow X_2\rightarrow X_1\rightarrow X_0\rightarrow A\rightarrow 0,
\end{equation}
 with differential in positive degrees $n$ given by 
$d_n= \sum_{i+j=n} (d_{i,j}^h + d_{i,j}^v)$,
and in degree 0 by the multiplication map. 
By~\cite[Theorem~4.3]{quad},  $X_{\DOT}$ is a free resolution
of the $A^e$-module $A=S\rtimes G$. 

\subsection*{Chain maps between reduced bar 
and twisted product resolutions}
We found in~\cite{quad} useful chain maps
converting between the bar resolution 
and the (nonreduced) twisted product resolution $X_{\DOT}$
of $A=S\rtimes G$.  
We next extend~\cite[Lemma~4.7]{quad},
adding more details and  adapting it for
use with the
reduced bar resolution.  See also~\cite[Lemma~7.3]{ueber}
for the special case $S=S(V)$. 
We consider elements of $\tilde{K}_j
\subset  V^{\otimes j} $
to have graded degree $j$ and elements of $(\overline{kG})^{\ot i}$ to have graded degree 0.

\begin{lemma}\label{lem:maps}
For $A=S\rtimes G$, there exist $A$-bimodule homomorphisms 
$\phi_n: X_n\rightarrow A\ot \overline{A}^{\,\ot n }\ot A$
and $\psi_n: A\ot \overline{A}^{\,\ot n }\ot A \rightarrow X_{n}$ 
such that the diagram 
$$
\begin{small}
\xymatrix{
\cdots\ar[r] &
X_{2} \ar[r]^{d_2} \ar@<-.5ex>[d]_{\phi_2}&
X_{1} \ar[r]^{d_1} \ar@<-.5ex>[d]_{\phi_1}&
X_{0} \ar[r]^{d_0}\ar@<-.5ex>[d]_{\phi_0} &
A \ar[r] \ar@<-.5ex>[d]& 0 \\
\cdots\ar[r] &
A\ot \overline{A}\ot \overline{A} \ot A \ar[r]^{\delta_2} \ar@<-.5ex>[u]_{\psi_2}&
A\ot \overline{A} \ot A \ar[r]^{\delta_1}\ \ar@<-.5ex>[u]_{\psi_1}&
A\ot A \ar[r]^{\delta_0} \ar@<-.5ex>[u]_{\psi_0}&
A \ar[r] \ar@<-.5ex>[u]_{=} & 0& \\
}
\end{small}
$$
commutes, the maps $\phi_n$, $\psi_n$ are of graded degree~0, and $\psi_n\phi_n$
is the identity map on $X_n$ for $n=0,1,2$. 
\end{lemma}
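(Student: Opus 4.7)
The plan is to build on the chain maps between $X_{\DOT}$ and the (nonreduced) bar resolution of $A$ already constructed in \cite[Lemma~4.7]{quad}, and then to verify that these maps descend to the reduced bar resolution and satisfy the idempotent condition in low degrees.

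First I would define $\phi_n : X_n \to A \otimes \overline{A}^{\,\otimes n} \otimes A$ on each bigraded summand $X_{i,j}$ (with $i+j=n$) as the natural $A$-bimodule inclusion induced by $\overline{kG} \hookrightarrow \overline{A}$ and $V \hookrightarrow \overline{A}$ (the latter via $V \subset S \subset A$), with signs dictated by the total complex convention. Because every middle tensor factor lies in $\overline{kG}$ or in $V$, neither of which meets $k \cdot 1_A$, the image automatically lands in the reduced bar complex, and the maps have graded degree $0$. That $\phi_{\DOT}$ is a chain map is a bidegree-by-bidegree check: the horizontal piece $d^h$ is the $kG$-bar differential and the vertical piece $d^v$ is the Koszul differential of $S$, and each matches the corresponding restriction of $\delta$; the cross terms of $\delta$ that couple a group factor to a $V$-factor collapse via the semidirect product relation $gv = \,{}^g v \,g$, which is exactly the twist $\sigma$ used to define $X_{\DOT}$.

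Next I would construct $\psi_n$ using the vector space identification $A \cong S \otimes kG$ coming from the PBW property of $S \rtimes G$. Given a reduced bar chain $a_0 \otimes \bar a_1 \otimes \cdots \otimes \bar a_n \otimes a_{n+1}$, write each $\bar a_\ell = \sum_g s_{\ell,g}\, g$, commute all group elements leftward using $\sigma$ (populating the horizontal $X_{i,0}$ direction), and feed the residual $S$-tensors through a fixed $S^e$-chain map from the bar resolution of $S$ to the Koszul resolution $K_{\DOT}$; such a map exists by the comparison lemma, since both are projective resolutions of $S$. The output distributes across the summands $X_{i,j}$, and whenever some $\bar a_\ell$ is a scalar the corresponding contribution vanishes, so $\psi_n$ factors through the reduced bar complex. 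That $\psi_{\DOT}$ is a chain map follows either by direct computation along the lines of \cite[Lemma~4.7]{quad} or, more abstractly, by the comparison lemma applied in the reverse direction (with chain-homotopy corrections absorbed degree by degree).

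For $\psi_n \phi_n = \id$ when $n \le 2$: by construction $\phi_n$ embeds each $X_{i,j}$ into tensors whose middle factors already lie in $V$ or $\overline{kG}$, so the sorting step of $\psi_n$ returns the original bigraded component, and the bar-to-Koszul comparison acts as the identity on a single Koszul tensor of length $\leq 2$ (no straightening is needed for elements of $V$, nor for a single element of $R \subset V \otimes V$). The main obstacle is the bidegree-$(1,1)$ case in degree $2$, where the bar differential mixes group and $V$-tensors nontrivially; once the signs coming from the total complex and the twist-map coefficients are pinned down, the identity $\psi_2 \phi_2 = \id$ follows by direct inspection on each of $X_{2,0}$, $X_{1,1}$, and $X_{0,2}$, completing the proof.
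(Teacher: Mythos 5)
There is a genuine gap in your construction of $\phi$. On the mixed components $X_{i,j}$ with $i,j\geq 1$ the natural $A$-bimodule inclusion is \emph{not} a chain map, and the cross terms do not ``collapse'' as you claim. Concretely, on $1\ot g\ot v\ot 1$ in $X_{1,1}$ the twisted differential of $X_{\DOT}$ gives
$$d_2(1\ot g\ot v\ot 1)= g\ot v\ot 1 \;-\; 1\ot {}^gv\ot g \;-\; {}^gv\ot g\ot 1 \;+\; 1\ot g\ot v ,$$
so $\phi_1 d_2(1\ot g\ot v\ot 1)$ has these four terms, whereas for the plain inclusion $\phi_2(1\ot g\ot v\ot 1)=1\ot g\ot v\ot 1$ one gets
$$\delta_2(1\ot g\ot v\ot 1)= g\ot v\ot 1 \;-\; 1\ot {}^gv\, g\ot 1 \;+\; 1\ot g\ot v .$$
The middle term $1\ot {}^gv\,g\ot 1$ is a nonzero element of $A\ot\overline{A}\ot A$ (the product ${}^gv\,g$ occupies a single bar factor) and is not equal to $-\,1\ot{}^gv\ot g - {}^gv\ot g\ot 1$; the relation $gv={}^gv\,g$ in $A$ only helps after you add a correction term, namely $\phi_2(1\ot g\ot v\ot 1)=1\ot g\ot v\ot 1-1\ot{}^gv\ot g\ot 1$, and similarly $\phi_3$ on $X_{1,2}$ requires two $\sigma$-twisted correction terms. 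These corrections are precisely the nontrivial content of the lemma (and they are what the later bracket computations use), so ``inclusion with total-complex signs'' is not a valid definition of $\phi$.

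Your $\psi$ is also under-specified in a way that matters for the last claim: sorting group elements to the left and then feeding the $S$-tensors through an arbitrary bar-to-Koszul comparison map gives \emph{some} chain map only after homotopy corrections, and once $\phi$ is corrected as above, $\psi_n\phi_n=\id$ is no longer automatic from ``sorting returns the original component''; for instance one needs $\psi_2(1\ot{}^gv\ot g\ot 1)$ to land in $X_{2,0}$ and die there, which is a choice, not a consequence. The paper instead prescribes $\psi_1,\psi_2$ on an explicit free $A$-bimodule basis of the reduced bar resolution, e.g. $\psi_1(1\ot vg\ot 1)=1\ot v\ot g+v\ot g\ot 1$ and $\psi_2(1\ot g\ot vh\ot 1)=1\ot g\ot v\ot h+{}^gv\ot g\ot h\ot 1$, verifies $d\psi=\psi\delta$ on those elements, and extends freely to the remaining basis elements; with the corrected $\phi_2$ these choices force $\psi_n\phi_n=\id$ for $n=0,1,2$ by construction. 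Your outline can be repaired along these lines, but as written the definition of $\phi$ fails the chain map condition exactly where the group twisting enters.
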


In fact, it can be shown that there are chain maps  such that 
$\psi_n\phi_n$ is the identity map on $X_n$
for each $n$.
We will not need this more general statement here, but
rather some of the explicit values of the maps in low degrees as given in
the proof of the lemma. 

\begin{proof}
We again suppress tensor symbols in writing elements
of $A$ to avoid confusion with tensor
products defining the resolution.
In degree 0, $\psi_0$ and $\phi_0$ 
may be chosen to be identity
maps on $A\ot A$.
As in \cite[Lemma~4.7]{quad}, we may set
\[
 \phi_1(1\ot g\ot 1)=1\ot g\ot 1 \ \ (\mbox{on }X_{1,0}), 
\quad \ \phi_1(1\ot v\ot 1) = 1\ot v\ot 1 \ \ (\mbox{on }X_{0,1}) ,
\]
for all nonidentity $g$ in $G$ and $v$ in $V$, 
and these values determine $\phi_1$ as an $A$-bimodule map.
Moreover, we set
\[
\begin{aligned}
\psi_1(1\ot g\ot 1) & = 1\ot g\ot 1 \ \ \ (\mbox{in } X_{1,0}),\\
\psi_1(1\ot vg\ot 1) &=1\ot v\ot g + v\ot g\ot 1 \ \ 
  \  (\mbox{in } X_{0,1}\oplus X_{1,0})
\end{aligned}
\]
for nonidentity $g$ in $G$ and $v$ in $V$ and 
use the identification (\ref{eqn:xij1}) for evaluating the differential
to check that $d_1\psi_1 = \psi_0\delta_1$ on these arguments. 
In order to extend $\psi_1$ to an $A$-bimodule map
on $A\ot \overline{A}\ot A$, 
we first choose a homogeneous 
vector space basis of 
$\overline{A}$ 
consisting of
the elements $g\neq 1_G$, $vg$,
and $sg$
as $g$ ranges
over the elements of $G$, 
$v$ ranges over a $k$-basis of $V$,
and $s$ ranges over a $k$-basis
of homogeneous elements of $S$ of degree $\geq 2$.
Tensoring each of these elements
on the left and right by $1$ then
gives a free $A$-bimodule basis of $A\ot\overline{A}\ot A$.
The function $\psi_1$ is already defined
on elements of the form $1\ot g\ot 1$ and
$1\ot vg\ot 1$;
we may define $\psi_1$ on elements of the form $1\ot s\ot 1$ so that
$d_1\psi_1= \psi_0 \delta_1$ on these elements
and then define
\[
\psi_1(1\ot sg\ot 1) = \psi_1(1\ot s\ot g) + s\ot g \ot 1
\] 
so that 
$d_1\psi_1= \psi_0 \delta_1$ on these elements as well. 
Then $\psi_1\phi_1$ is the identity map on $X_1$,
by construction. 

Define $\phi_2$ by setting
\[
\begin{aligned}
& \phi_2(1\ot g\ot h\ot 1) & &=& &1\ot g\ot h\ot 1& 
\ \ \ \ &(\mbox{on } X_{2,0}), \\
& \phi_2(1\ot g\ot v\ot 1) & &=& &1\ot g\ot v\ot 1 - 1\ot {}^gv\ot g\ot 1&
  \ \ \ \ &(\mbox{on }X_{1,1}),\\
 & \phi_2(1\ot r\ot 1) & &=& &1\ot r\ot 1& \ \ \ \ 
&(\mbox{on } X_{0,2})
\end{aligned}
\]
for all nonidentity $g,h$ in $G$, $v$ in $V$, and $r$ in $R$. 
One may check that $\delta_2\phi_2 = \phi_1 d_2$.
Now set
\[
\begin{aligned}
  &\psi_2(1\ot g\ot h\ot 1)& &=& &1\ot g\ot h\ot 1& 
\ \ \ \ \ \ \ \ &(\mbox{in }X_{2,0}),\\
   &\psi_2(1\ot vh\ot g\ot 1)& &=& &v\ot h\ot g\ot 1& 
&(\mbox{in } X_{2,0}),\\
   &\psi_2(1\ot g\ot vh\ot 1)& &=& &1\ot g\ot v \ot h + {}^gv\ot g\ot h\ot 1& 
&(\mbox{in }X_{1,1}\oplus X_{2,0}),\\
  &\psi_2(1\ot r\ot 1)& &=& &1\ot r\ot 1& 
&(\mbox{in }X_{0,2}),\\
\end{aligned}
\]
for all  $g,h$ in $G$, $v$ in $V$, and
$r$ in $R$. 
A calculation shows that $d_2\psi_2=\psi_1\delta_2$ on these elements.
Letting $g,h$ range over the elements of $G$, $v$ over
a $k$-basis of $V$, and $r$ over a $k$-basis of $R$, we 
obtain a linearly independent set consisting of elements of the form
$1\ot g\ot h\ot 1$, 
$1\ot vh\ot g\ot 1$,
$1\ot g\ot vh\ot 1$, 
and $1\ot r\ot 1$
on which $\psi_2$ has already been defined.
Extend the $k$-basis of $R$ to a $k$-basis of $V\ot V$ by including
additional elements of the form $v\ot w$ for $v,w$ in $V$. Now 
define $\psi_2(1\ot v\ot w\ot 1)$ 
arbitrarily subject to the condition that $d_2\psi_2 = \psi_1\delta_2$
on these elements. 
Let 
\begin{equation}\label{explicitpsivalues}
\begin{aligned}
\psi_2(1\ot vg\ot w\ot 1)\ &=& &\psi_2(1\ot v\ot {}^gw\ot g) + v\ot g\ot w\ot 1,& 
\ \text{ and }\\
\psi_2(1\ot v\ot wg\ot 1)\ &=& &\psi_2(1\ot v\ot w\ot g)&
\end{aligned}
\end{equation}
for all $g$ in $G$ and $v,w$ in $V$.
One checks that $d_2\psi_2 = \psi_1\delta_2$ on these elements as well. 
Extend these elements to a free $A$-bimodule basis of $A\ot\overline{A}
\ot\overline{A}\ot A$.
We may 
define $\psi_2$ on the remaining free basis elements so that 
$d_2\psi_2 =\psi_1\delta_2$. By construction, $\psi_2\phi_2$ is the
identity map on $X_2$.
\end{proof}

We will  need some further values of $\phi$ in homological degree 3,
which we set in the next lemma.
The lemma is proven by directly checking the chain map condition. 
Other values of $\phi_3$ may be defined by 
extending to a free $A$-bimodule basis of $A\ot (\overline{A})^{\ot 3}\ot A$.  
\begin{lemma}
\label{eqn:phi3}
We may choose the map
$\phi_3$  in Lemma~\ref{lem:maps}
so that
$$
\begin{aligned}
\phi_3(1\ot x\ot 1) & = 1\ot x\ot 1 \ \ \ (\mbox{on }X_{0,3}),\\
  \phi_3(1\ot g\ot r\ot 1) 
 & =   1\ot g\ot r\ot 1 - (1\ot \sigma\ot 1\ot 1)(1\ot g\ot r\ot 1)\\
 &\hspace{.4cm} + (1\ot 1\ot\sigma\ot 1)(1\ot \sigma\ot 1\ot 1)(1\ot g\ot r
   \ot 1) \ \ \ (\mbox{on }X_{1,2}) 
\end{aligned}
$$
for all nonidentity $g$ in $G$, $r$ in $R$, 
and $x$ in $(V\ot R)\cap (R\ot V)$. 
\end{lemma}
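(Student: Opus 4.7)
The plan is to extend the chain map from Lemma~\ref{lem:maps} one step further, defining $\phi_3$ as an $A$-bimodule homomorphism on
\[
X_3 = X_{0,3}\oplus X_{1,2}\oplus X_{2,1}\oplus X_{3,0}
\]
by prescribing its values on a free bimodule basis of each summand, then verifying the chain map identity $\delta_3\phi_3 = \phi_2 d_3$ on the prescribed generators. On generators from $X_{0,3}$ and $X_{1,2}$ we take the explicit formulas stated in the lemma; on generators from $X_{2,1}$ and $X_{3,0}$ we will define $\phi_3$ by choosing any preimage of $\phi_2 d_3(y)$ under $\delta_3$. Such a preimage exists because $\delta_2\phi_2 d_3(y) = \phi_1 d_2 d_3(y) = 0$ by the chain map property of $\phi_2$ from Lemma~\ref{lem:maps} together with $d^2 = 0$, and the reduced bar resolution is exact in positive degrees.

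For the generators $1\ot x\ot 1$ of $X_{0,3}$ with $x \in (V\ot R)\cap(R\ot V)$, only the vertical piece $d^v_{0,3}$ contributes to $d_3$, and it is the restriction of the reduced bar differential $\delta_3$ to $\tilde K_3\subset V^{\ot 3}$. Since $\phi_2$ is the identity on elements of $X_{0,2}$ of the form $1\ot r\ot 1$ with $r\in R$, both $\delta_3\phi_3(1\ot x\ot 1)$ and $\phi_2 d_3(1\ot x\ot 1)$ reduce to $\delta_3(1\ot x\ot 1)$, so the chain map identity holds on the nose.

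The core calculation is for an $X_{1,2}$ generator $1\ot g\ot r\ot 1$ with $g\neq 1_G$ and $r = \sum v\ot w\in R$. Expanding $d_3 = d^h_{1,2}+d^v_{1,2}$ via the identification (\ref{eqn:xij1}) and the right $A$-action on $A\ot\overline{kG}\ot kG$ gives
\[
d^h_{1,2}(1\ot g\ot r\ot 1) = g\ot r\ot 1 - 1\ot {}^gr\ot g \in X_{0,2},
\]
while the vertical piece $d^v_{1,2} = -(1\ot d_2)$ produces a signed sum of elements in $X_{1,1}$ on which $\phi_2$ acts via the formula from Lemma~\ref{lem:maps}, introducing the ``Koszul correction'' terms of the shape $-1\ot{}^gv\ot g\ot 1$. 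On the other side, $\delta_3$ applied to the three telescoped summands prescribed by the lemma yields a twelve-term expression. The main obstacle is the bookkeeping of these twelve terms: one tracks the signs from $(-1)^i$ in the bar differential, uses the relations $gv = {}^gv g$ and $gw = {}^gw g$ in $A$ to identify the ``middle-merge'' contributions $1\ot {}^gvg\ot w\ot 1$ and $1\ot{}^gv\ot{}^gwg\ot 1$ appearing with opposite signs in consecutive summands, and invokes the $G$-equivariance of $R$ (so that $\sum{}^g(vw) = {}^g(\sum vw) = 0$ in $S$) to discard the remaining ``relation-ideal'' contributions. What survives on each side should match termwise, giving the chain map identity for the $X_{1,2}$-generators.

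Finally, once these two families of identities are verified, extending $\phi_3$ to the remaining free bimodule generators of $X_{2,1}\oplus X_{3,0}$ is routine: for each such generator $y$ we pick any preimage of $\phi_2 d_3(y)$ under $\delta_3$, which we may take to be homogeneous of graded degree $0$ since all maps and resolutions are graded. This completes the construction of a chain map $\phi_3$ of graded degree $0$ taking the prescribed values.
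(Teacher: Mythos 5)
Your proposal is correct and follows essentially the same route as the paper: the paper's proof simply asserts that the stated values satisfy the chain map condition $\delta_3\phi_3=\phi_2 d_3$ by direct verification and that $\phi_3$ is then extended on the remaining free bimodule generators, which is exactly your construction. The cancellations you indicate in the twelve-term check (using $gv={}^gvg$ in $A$, the vanishing of products of elements of $R$, and $G$-stability of $R$ for the twisted terms) are precisely the ones needed, so your sketch matches the intended computation.
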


\section{Homological PBW conditions}\label{sec:hom2}
We now give homological conditions for a filtered algebra
to be a PBW deformation of a Koszul algebra twisted
by a group.
These conditions are a translation of the necessary
homological Conditions~(\ref{obst0}), 
(\ref{obst1}), and (\ref{obst2}) into conditions on the parameter
functions $\alpha,\beta, \lambda$ defining a potential deformation; we prove these conditions
are in fact sufficient.

Again, let $S$ be a Koszul algebra generated by a finite dimensional
vector space $V$ with defining relations 
$R$ and an action of a finite group $G$ by graded automorphisms.
Let $R'$ be the space of group action relations defined in
(\ref{eqn:R-prime}).  Let $A=S\rtimes G$.
We use the resolution $X_{\DOT}$ of (\ref{xij2}) to express
the Hochschild cohomology $\HH^{\DOT}(A)$. 

\begin{remark}\label{rk:extend-fcns}
{\em 
Just as in \cite[Lemma 8.2]{ueber}, we may 
identify the $k$-linear functions 
$$\alpha : R\rightarrow V\ot kG, \ \ 
\beta: R\rightarrow kG , \ \ \mbox{ and } \lambda: R'\rightarrow kG$$ 
with
2-cochains on the resolution $X_{\DOT}$, i.e.,
$A$-bimodule homomorphisms from $X_2$ to $A$. 
Indeed, both $\alpha$ and $\beta$ 
extend uniquely to 
cochains on $X_{0,2}= A\ot R\ot A$
since a cochain is an $A$-bimodule homomorphism 
and thus determined there by its values on $R$.
Similarly, 
$\lambda$ corresponds to a unique
cochain on $X_{1,1}$
taking the value $\lambda (g\ot v\ot 1-  1\ot {}^gv\ot g)$
on elements of the form $1\ot g \ot v\ot 1$.
Here we identify the target spaces of $\alpha,\beta,\lambda$ 
with subspaces of $A$. 
We  extend these cochains 
defined by $\alpha, \beta,\lambda$ to all of $X_{\DOT}$
by setting them to be 0 on 
the components of $X_{\DOT}$ 
on which we did not already define them.  
}\end{remark}

We fix  choices of chain maps $\phi$, $\psi$ satisfying Lemmas~\ref{lem:maps} 
and~\ref{eqn:phi3}.
We define the Gerstenhaber bracket of cochains on $X_{\DOT}$
by transferring 
the Gerstenhaber bracket~(\ref{eqn:G-brack}) on the (reduced) bar resolution 
to $X_{\DOT}$ using these chain maps: 
If $\xi,\nu$ are Hochschild cochains on $X_{\DOT}$, we define
\begin{equation}\label{eqn:K-brack}
  [\xi,\nu] = \phi^* ( [\psi^*(\xi), \psi^*(\nu)]) ,  
\end{equation}
another cochain on $X_{\DOT}$.
At the chain level, this bracket depends on the choice of chain maps
$\phi,\psi$, although at the level of cohomology, it does not. 
The choices we have made in Lemmas~\ref{lem:maps} and~\ref{eqn:phi3}
 provide valuable information, as we see next.

\begin{thm}\label{thm:hom}
Let $S$ be a Koszul algebra over the field $k$ generated by
a finite dimensional vector space $V$.
Let $G$ be a finite group acting on $S$ by
graded automorphisms. 
The algebra $\chabl$ defined in (\ref{eqn:Habc}) is a PBW deformation of $S\rtimes G$
if and only if 
\begin{itemize}
\item[(a)]
$d^*(\alpha+\lambda)=0$, 
\item[(b)]
$[\alpha+\lambda, \alpha+\lambda]=2d^*\beta$, and
\item[(c)]
$[\lambda+\alpha, \beta]=0$,
\end{itemize}
where $\alpha,\beta,\lambda$ are identified with 
cochains on the
twisted product resolution $X_{\DOT}$ as in Remark~\ref{rk:extend-fcns}. 
\end{thm}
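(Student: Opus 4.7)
The plan is to invoke Proposition~\ref{prop:PBW}, reducing the PBW property of $\chabl$ to the existence of a graded deformation $A_t$ of $A=S\rtimes G$ over $k[t]$ with $A_t|_{t=1}\cong \chabl$. Such a graded deformation is encoded by 2-cochains $\mu_1,\mu_2,\ldots$ on the reduced bar resolution of $A$, of graded degrees $-1,-2,\ldots$, satisfying~(\ref{obst0})-(\ref{obst2}) together with the higher analogues $\delta_3^*\mu_n=\tfrac{1}{2}\sum_{i+j=n}[\mu_i,\mu_j]$. Using the chain maps $\phi,\psi$ of Lemmas~\ref{lem:maps} and~\ref{eqn:phi3} and the bracket~(\ref{eqn:K-brack}), I will transfer these conditions to the twisted product resolution $X_{\DOT}$, where the bounded internal grading of the components will make the higher obstructions evaporate.

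For the forward direction, suppose $\chabl$ is PBW and let $A_t$ be the resulting graded deformation. The explicit formulas $\phi_2(1\ot r\ot 1)=1\ot r\ot 1$ and $\phi_2(1\ot g\ot v\ot 1)=1\ot g\ot v\ot 1-1\ot {}^gv\ot g\ot 1$ of Lemma~\ref{lem:maps}, combined with the identification of $\mu_1,\mu_2$ in the proof of Proposition~\ref{prop:PBW}, show that $\phi^*\mu_1=\alpha+\lambda$ and $\phi^*\mu_2=\beta$ as cochains on $X_{\DOT}$ (extended by zero as in Remark~\ref{rk:extend-fcns}). Applying $\phi^*$ to~(\ref{obst0}) and~(\ref{obst1}) yields (a) and (b) directly. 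For (c), observe that any $A$-bimodule cochain on $X_{\DOT}$ of graded degree $-j$ with $j\geq 3$ must vanish on $X_2=X_{0,2}\oplus X_{1,1}\oplus X_{2,0}$, since the internal graded degree of the middle factor of $X_{i,j}$ never exceeds $2$ there. Hence $\phi^*\mu_3=0$, and the pullback of~(\ref{obst2}) collapses to $[\alpha+\lambda,\beta]=0$, which is (c).

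Conversely, assume (a)-(c). Define $\nu_1=\alpha+\lambda$, $\nu_2=\beta$, and $\nu_j=0$ for $j\geq 3$ as cochains on $X_{\DOT}$. By construction, (a), (b), (c) are the first three obstruction equations in the $X_{\DOT}$-formulation. For $n\geq 4$, the sum $\tfrac{1}{2}\sum_{i+j=n}[\nu_i,\nu_j]$ reduces to $\tfrac{1}{2}[\nu_2,\nu_2]$ at $n=4$ and vanishes outright for $n\geq 5$; and the same graded-degree count applied at homological level~$3$---every 3-cochain on $X_3=X_{0,3}\oplus X_{1,2}\oplus X_{2,1}\oplus X_{3,0}$ of graded degree $-4$ must vanish because internal degrees there are at most $3$---forces $[\nu_2,\nu_2]=0$. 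Thus every obstruction on $X_{\DOT}$ vanishes, and transferring the deformation back through $\psi^*$ (using $\psi_n\phi_n=\id_{X_n}$ for $n\leq 2$) produces a graded deformation $A_t$ of $A$. Proposition~\ref{prop:PBW} then identifies $A_t|_{t=1}\cong\chabl$, establishing the PBW property.

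The main obstacle is passing rigorously between the deformation data on $X_{\DOT}$ and on the reduced bar resolution: the bracket~(\ref{eqn:K-brack}) is defined so that this transfer is coherent, but one must use the explicit low-degree formulas in Lemmas~\ref{lem:maps} and~\ref{eqn:phi3} to verify the bracket and differential compatibilities at the cochain level. The Koszul hypothesis on $S$ enters only through the bound on the internal grading of $X_n$, which is what makes both $\phi^*\mu_3=0$ in the forward direction and $[\nu_2,\nu_2]=0$ in the converse direction automatic.
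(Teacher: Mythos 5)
Your overall strategy (transfer along the chain maps $\phi,\psi$ of Lemmas~\ref{lem:maps} and~\ref{eqn:phi3}, kill higher obstructions by the internal-degree bound coming from Koszulness, and connect to Proposition~\ref{prop:PBW}) is the paper's strategy, but two of your key steps do not go through as stated. In the forward direction, the claim that applying $\phi^*$ to (\ref{obst1}) yields (b) ``directly'' overlooks that the bracket on $X_{\DOT}$ is \emph{defined} by (\ref{eqn:K-brack}) as $\phi^*[\psi^*(\cdot),\psi^*(\cdot)]$, and $\psi^*\phi^*\mu_1\neq\mu_1$ as cochains because $\phi\psi$ is not the identity on the reduced bar resolution. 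Condition (b) is an equality of cochains, not of cohomology classes, so a formal pullback is not enough: the paper's proof spends most of this direction using the explicit values of $\phi_3$ on $X_{1,2}$ and $X_{0,3}$ together with the cocycle property of $\mu_1$ (to force intermediate outputs into the image of $\phi_1$, where $\psi\phi=\id$) to show that $[\alpha+\lambda,\alpha+\lambda]$ really agrees with $\phi^*[\mu_1,\mu_1]$ there. You name this as ``the main obstacle'' but never resolve it; for (c) a graded-degree argument does rescue the cochain-level identity (any $2$-cochain on $X_{\DOT}$ of degree $-3$ is zero), but for (b) it does not.

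The converse contains the more serious gaps. First, deformations are governed by multiplication maps on the (reduced) bar resolution, and (a)--(c) only control the images of the obstructions under $\phi^*$. With $\mu_1=\psi^*(\alpha+\lambda)$ and $\mu_2=\psi^*\beta$, the cochain $\gamma=\delta_3^*(\mu_2)-\tfrac{1}{2}[\mu_1,\mu_1]$ need not vanish: (b) gives only $\phi_3^*(\gamma)=0$, hence $\gamma$ is a coboundary, and one must replace $\mu_2$ by $\tilde\mu_2=\mu_2-\mu+\mu'$ (chosen so that $\phi^*\tilde\mu_2=\beta$ still holds) before the first obstruction vanishes on the nose; similarly (c) only shows $[\mu_1,\tilde\mu_2]$ is a coboundary, which is then trivialized by a generally \emph{nonzero} $\mu_3$, and likewise for all higher $\mu_j$. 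Your device of declaring $\nu_j=0$ for $j\geq 3$ on $X_{\DOT}$, checking ``obstructions on $X_{\DOT}$,'' and ``transferring back through $\psi^*$'' skips exactly these corrections; there is no deformation theory living on $X_{\DOT}$ in which the equations you write control associativity, and the higher obstructions vanish cohomologically (they lie in $\HH^{3,-(i+1)}(A)$ and are killed by $\phi^*$ since $X_3$ is generated in degrees $\leq 3$), not because cochain-level brackets of your $\nu_j$ are zero. Second, even after a graded deformation $A_t$ is produced, Proposition~\ref{prop:PBW} cannot ``identify $A_t|_{t=1}\cong\chabl$'': it runs in the opposite direction, from a PBW deformation to a graded deformation. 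The paper must still construct a surjective filtered homomorphism $\chabl\to A_t|_{t=1}$ by checking that the defining relations map to zero (using Lemma~\ref{lem:maps}) and then compare dimensions of filtered components with those of $S\rtimes G$ to conclude it is an isomorphism; this final argument is what actually delivers the PBW property and is missing from your proposal.
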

\begin{proof}
We adapt ideas of~\cite[Theorem 4.1]{BG}, first
 translating the above Conditions~(a), (b), and~(c)  
to conditions 
on the reduced bar resolution itself. 
The proof is similar to that of~\cite[Theorem 5.4]{quad},
but certain arguments must be
altered to allow for the additional parameter function $\lambda$.

If $\chabl$ is a PBW deformation of $S\rtimes G$, then by Proposition~\ref{prop:PBW},
there are Hochschild 2-cochains $\mu_1$ and $\mu_2$ on the
(reduced) bar resolution such that 
the Conditions~(\ref{obst0}),
(\ref{obst1}), and (\ref{obst2}) hold, that is, 
$\mu_1$ is a Hochschild 2-cocycle, $[\mu_1,\mu_1]=2\delta^*(\mu_2)$,
and $[\mu_1,\mu_2]$ is a coboundary.
By the proofs of Proposition~\ref{prop:PBW} and Lemma~\ref{lem:maps}, 
$$
\alpha+\lambda =\phi_2^*(\mu_1)  \ \ \mbox{ and } \ \ 
\beta =\phi_2^*(\mu_2) . 
$$
Since $\mu_1$ is a cocycle, it follows that $d^*(\alpha +\lambda)=0$,
that is, Condition (a) holds. 
For Condition (b), note that each side of the equation is automatically 0
on $X_{3,0}$ and on $X_{2,1}$, by a degree argument.
We will evaluate each side of the equation on $X_{1,2}$ and on $X_{0,3}$. 
By definition,
\begin{eqnarray*}
 [\alpha + \lambda, \alpha+\lambda] & = & \phi^*[\psi^*(\alpha+\lambda), \psi^*(\alpha + \lambda) ] \\
    &=& \phi^*([\psi^*\phi^*(\mu_1),\psi^*\phi^*(\mu_1)])\\
   & = & 2 \phi^*(\psi^*\phi^*(\mu_1) (\psi^*\phi^*(\mu_1)\ot 1 - 1\ot \psi^*\phi^*(\mu_1)).
\end{eqnarray*}
We evaluate on $X_{1,2}$. By Lemma~\ref{eqn:phi3}, the image of $\phi_3$ on $X_{1,2}$
is contained in 
\[
   (kG\ot \Ima (\phi_2\big|_{X_{0,2}}))\oplus 
(V\ot \Ima(\phi_2\big|_{X_{1,1}}))\cap 
 (\Ima (\phi_2\big|_{X_{0,2}})\ot kG)\oplus 
(\Ima(\phi_2\big|_{X_{1,1}})\ot V) .
\]
Therefore, since $\psi_2\phi_2$ is the identity map, applying $\psi^*\phi^*(\mu_1)\ot 1 - 1\ot \psi^*\phi^*(\mu_1)$
to an element in the image of $\phi_3$ is 
the same as applying $\mu_1\ot 1 - 1\ot \mu_1$.
Since $\mu_1$ is a Hochschild 2-cocycle, the image of $\mu_1\ot 1 - 1\ot \mu_1$ is 0 upon projection to $S\rtimes G$,
which implies that the image of $\mu_1\ot 1 - 1\ot \mu_1$ on $\phi_3(X_{1,2})$ is
contained in the subspace of $\overline{A}\ot \overline{A}$ spanned by all $g\ot v - {}^g v\ot g$
for nonidentity $g$ in $G$ and $v$ in $V$.
This is in the image of $\phi_1$, and so again, applying $\psi^*\phi^*(\mu_1)$ is the same as applying $\mu_1$.
Hence $[\alpha+\lambda,\alpha+\lambda] = \phi^*([\mu_1,\mu_1])$ on $X_{1,2}$. 
Condition~(\ref{obst1})
then implies that Condition (b) holds on $X_{1,2}$. 
A similar argument verifies 
Condition~(b) on
 $X_{0,3}$.
Condition (c) holds by a degree argument:  
The bracket $[\lambda+\alpha, \beta]$ is cohomologous to $[\mu_1,\mu_2]$, which 
by (\ref{obst2}) is a coboundary. So $[\alpha+\lambda,\beta]$ is itself a coboundary:  
 $[\lambda+\alpha, \beta]=d^*(\xi)$ for some 2-cochain $\xi$.
Now $[\lambda +\alpha,\beta]$ is of graded degree~$-3$, and 
the only 2-cochain on $X_{\DOT}$ of graded degree~$-3$ is 0. 

For the converse, assume Conditions (a), (b), and (c) hold.
We may now set
$\mu_1=\psi^*(\alpha + \lambda)$ and $\mu_2=\psi^*(\beta)$. 
Set 
\[
\gamma =\delta_3^*(\mu_2)-\tfrac{1}{2}[\mu_1,\mu_1] .
\]
Condition~(a) of the theorem implies that $\alpha+\lambda$
is a 2-cocycle and thus $\mu_1$ is a  2-cocycle
on the reduced bar resolution of $A$.
The 2-cocycle $\mu_1$ then is a first multiplication map on $A$ and
defines a first level deformation $A_1$
of $A=S\rtimes G$.

Next we will see that Condition~(b) implies 
this first level deformation can be extended to a second 
level deformation.
By Lemma~\ref{lem:maps},
$$
\begin{aligned}
\phi^*_3(\gamma) &= \phi^*_3 (\delta_3^*\left(\psi_2^*(\beta)\right))
-\tfrac{1}{2}\phi_3^*[\psi^*_2(\alpha+\lambda), \psi^*_2(\alpha+\lambda)]\\
&= d^*(\phi^*_2 \psi_2^*(\beta))
-\tfrac{1}{2}[\alpha+\lambda, \alpha+\lambda] \\
&= d_3^*(\beta)
-\tfrac{1}{2}[\alpha+\lambda, \alpha+\lambda] . \\
\end{aligned}
$$
Hence $\phi_3^*(\gamma)=0$ by Condition~(b).
This forces $\gamma$ to be a coboundary, say 
$\gamma=\delta^*(\mu)$ for some 2-cochain $\mu$ on the reduced bar resolution,
necessarily of graded degree $-2$.
Now, 
$$
d^*(\phi^*(\mu))= \phi^* (\delta^*( \mu))=\phi^*(\gamma) =0\, ,
$$
so $\phi^*(\mu)$ is a 2-cocycle. 
Then there must be a 2-cocycle $\mu'$ on the reduced bar resolution 
with $\phi^*\mu'=\phi^*\mu$.
We replace $\mu_2$ by $\tilde{\mu}_2=\mu_2-\mu+\mu'$ so that
$\phi^*(\tilde{\mu}_2)=\beta$
but 
$$2\delta^*(\tilde{\mu}_2)= 2\delta^*(\mu_2+\mu')-2\gamma
= [\mu_1,\mu_1] $$
by the definition of $\gamma$, 
since $\mu'$ is a cocycle. 
Thus the obstruction to lifting $A_1$ to a second level deformation
using the multiplication map $\tilde{\mu}_2$ vanishes, and 
$\mu_1$ and $\tilde{\mu}_2$ together define
a second level deformation $A_2$ of $A$.

We now argue that Condition~(c) implies  $A_2$ lifts
to a third level deformation of $A$.
Adding the coboundary $\mu'-\mu$ to $\mu_2$ adds a coboundary to
$[\mu_2, \mu_1]$, and hence
$[\tilde{\mu}_2, \mu_1] = \delta^*_3(\mu_3)$
for some cochain $\mu_3$ on the reduced bar resolution of graded degree $-3$.
Thus the obstruction to lifting $A_2$ to a third level deformation
vanishes and the multiplication maps
$\mu_1, \tilde{\mu}_2, \mu_3$ define a third level deformation $A_3$ of $A$.

The obstruction to lifting $A_3$ to a fourth level deformation
of $A$ lies in $\HH^{3,-4}(A)$ by~\cite[Proposition 1.5]{BG}.
Applying the map $\phi^*$ to this obstruction gives
a cochain of graded degree $-4$ on $X_3$,
as $\phi$ is of graded degree $0$ as a chain map by Lemma~\ref{lem:maps}.   
But $X_3$ is generated, as an $A$-bimodule, by elements of graded degree 3
or less,
and thus $\phi^*$ applied to 
the obstruction is 0, implying that the obstruction
is a coboundary.  Thus the deformation $A_3$ lifts
to a fourth level deformation $A_4$ of $A$.  Similarly, 
the obstruction to lifting an $i$-th level deformation $A_i$ of $A$
lies in $\HH^{3, -(i+1)}$, and again since $S$ is Koszul, 
the obstruction is a coboundary. 
So the deformation $A_i$ lifts to $A_{i+1}$, 
an $(i+1)$-st level deformation of $A$, for all $i\geq 1$.

The corresponding graded deformation 
$A_t$ of $A$ 
is the vector
space $A[t]$ with multiplication determined by
$$
a*a'=aa'+\mu_1(a,a')t+\mu_2(a, a')t^2 + \mu_3(a, a')t^3+\ldots
$$
for all $a,a'\in A$. 

We next explain that $\chabl$ is isomorphic, as a filtered algebra,
to the fiber $A_t|_{t=1}$. 
First note that $A_t|_{t=1}$ 
is generated by $V$ and $G$
(since 
the associated graded algebra of $A_t$ is $A$).  
Thus we may define an algebra homomorphism 
$$T_{kG}(kG \ot V \ot kG) \longrightarrow A_t\big|_{t=1}$$
and then
use Lemma~\ref{lem:maps} to verify that the elements 
$$
\begin{aligned}
&r-\alpha(r)-\beta(r) \quad &  &\text{ for }r\in R, \text{ and }\\
&g\ot v\ot 1- 1\ot ^gv \ot g-\lambda(g \ot v) \quad & &\text{ for } g\in G, v\in V
\end{aligned}
$$
lie in the kernel.
We obtain a surjective homomorphism of filtered algebras,
$$\chabl\longrightarrow A_t\big|_{t=1}\, . $$
We consider the dimension over $k$ of 
each of the filtered components
in the domain and range:
Each filtered component of
$\chabl$ has dimension at most that of the corresponding
filtered component of  $S\rtimes G$
since its associated graded algebra
is necessarily a quotient of $S\rtimes G$.
But the associated graded algebra of $A_t|_{t=1}$
is precisely $S\rtimes G$, and so
$$
\dim_k(F^d(S\rtimes G))
\geq \dim_k(F^d(\chabl))
\geq \dim_k(F^d(A_t\big|_{t=1}))
= \dim_k(F^d(S\rtimes G)),
$$
where $F^d$ indicates the summand of filtered degree $d$
in $\NN$.
Thus these dimensions are all equal. It follows that 
$\chabl\cong A_t\big|_{t=1}$, and $\chabl$ is a PBW deformation.
\end{proof}

We now prove Theorem~\ref{thm:main} as a consequence of Theorem~\ref{thm:hom},
translating the homological conditions into Braverman-Gaitsgory style
conditions. 

\begin{proof}[Proof of Theorem~\ref{thm:main}]
We explained in Section~\ref{sec:PBW} that each PBW deformation of $S\rtimes G$
has the form $\chabl$ as defined in (\ref{eqn:Habc}) for some parameter
functions $\alpha,\beta,\lambda$. 
Theorem~\ref{thm:hom} gives necessary and sufficient conditions for
such an algebra $\chabl$ to be a PBW deformation of $S\rtimes G$.
We will show that the  Conditions (a), (b), and (c)
 of Theorem~\ref{thm:hom} are equivalent
to those of Theorem~\ref{thm:main}.

When convenient, we identify 
$$\Hom_{A^e}(A\ot \overline{A}^{\, n}\ot A,A)
\cong
\Hom_{k}(\overline{A}^{\, n}, A) \, .
$$
\subsection*{Condition (a): $d^*(\alpha +\lambda) = 0$}
The cochain $d^*(\alpha+\lambda)$ has homological degree~3 and is the
zero function 
if and only if it is 0 on each of $X_{3,0}$, $X_{2,1}$, $X_{1,2}$, and 
$X_{0,3}$. It is automatically 0 on $X_{3,0}$ since $d(X_{3,0})$
trivially intersects $X_{1,1}\oplus X_{0,2}$ on which $\alpha 
+\lambda$ is defined. 

On $X_{2,1}$, 
$d^*(\alpha)= 0$
automatically, as $\alpha$ is 0 on $X_{2,0}\oplus X_{1,1}$.
We evaluate $d^*(\lambda)$ 
on the elements of a free $A^e$-basis of $X_{2,1}$, using the 
identification (\ref{eqn:xij1}) for evaluating the differential: 
\[
\begin{aligned}
d^*(\lambda) &(1\ot g\ot h\ot v\ot 1) \\
 & = \lambda(g\ot h\ot v\ot 1 - 1\ot gh\ot v\ot 1
   + 1\ot g\ot {}^hv\ot h \\
   &\quad\quad + {}^{gh}v\ot g\ot h\ot 1
   - 1\ot g\ot h\ot v ) \\
  &= g \lambda(h\ot v)-\lambda(gh\ot v)+ \lambda(g\ot {}^h v) h
\end{aligned}
\]
in $A$ for all $g,h$ in $G$ and $v$ in $V$,
which can be rewritten as Theorem~\ref{thm:main}(1).
Therefore $d^*(\alpha + \lambda) |_{X_{2,1}} = 0$ 
if and only if
Theorem~\ref{thm:main}(1) holds.
(If $g$ or $h$ is the identity group element $1_G$, then in the evaluation above,
some of the terms are 0 as we are working with the reduced bar resolution.
The condition remains the same in these cases, and merely corresponds to the
condition $\lambda(1_G \ot v) =0$ for all $v$ in $V$.) 

On $X_{1,2}$, $d^*(\alpha+\lambda)=0$
if and only if
\[
\begin{aligned}
d^*(\alpha +\lambda)& (1\ot g\ot r\ot 1)\\
  & = (\alpha + \lambda) ( g\ot r \ot 1 - 1\ot {}^gr \ot g 
   - (\sigma\ot 1\ot 1)(g\ot r\ot 1) - 1\ot g\ot r ) \\
  &=    g \alpha(r) - \alpha( {}^gr)g 
- (1\ot \lambda)(\sigma\ot 1)(g\ot r) - (\lambda\ot 1)(g\ot r) \, 
\end{aligned}
\]
vanishes for all $g$ in $G$ and $r$ in $R$.
(Note that the multiplication map takes $r$
to 0 in $A$.)
This is equivalent to the equality 
\[
   1\ot \alpha - (\alpha \ot 1) (1\ot \sigma)(\sigma\ot 1)
   = (1\ot \lambda)(\sigma\ot 1) + \lambda\ot 1 
\]
as functions on $kG\ot R$ with values in $A$. 
Thus $d^*(\alpha + \lambda)|_{X_{1,2}} = 0$ if and only if
Theorem~\ref{thm:main}(3) holds.

On $X_{0,3}$, $d^*(\lambda)$ is automatically 0 since $\lambda$
is 0 on $X_{0,2}$. So we compute $d^*(\alpha)|_{X_{0,3}}$.
Consider $x$ in $(R\ot V)\cap (V\ot R)$. Then
\begin{equation}\label{image}
 d^*(\alpha) (1\ot x\ot 1)
 =  \alpha (x\ot 1 - 1\ot x)
=  (1\ot \alpha-\alpha\ot 1)(x).
\end{equation}
So $d^*(\alpha + \lambda)|_{X_{0,3}}= 0$ if and only if
$ 1\ot \alpha - \alpha\ot 1$ has image 0 in $A$, i.e., 
Theorem~\ref{thm:main}(6) holds.

\subsection*{Condition (b): $[\alpha+\lambda, \alpha + \lambda] =
2d^*(\beta)$}
On $X_{3,0}$ and on $X_{2,1}$, both sides of this equation
are automatically 0, as their graded degree is $-2$. 
We will compute their values on $X_{1,2}$ and on $X_{0,3}$.
First note that since $\lambda$ and $\alpha$ each have homological
degree 2, by the definition~(\ref{eqn:G-brack}) of bracket, $[\alpha,\lambda]
=[\lambda,\alpha]$ and so
\[
  [\alpha + \lambda, \alpha +\lambda]
   =[\alpha,\alpha]+2[\alpha,\lambda]+[\lambda,\lambda].
\]
We will compute $[\lambda,\lambda]$, $[\alpha,\lambda]$,
and $[\alpha,\alpha]$.

Note that $[\lambda,\lambda]$ can take nonzero values only on
$X_{1,2}$. We will compute its values on elements of the form
$1\ot g\ot r\ot 1$ for $g$ in $G$ and $r$ in $R$. 
By (\ref{eqn:G-brack}), (\ref{eqn:K-brack}), and  Lemmas~\ref{lem:maps} and~\ref{eqn:phi3}, 
$[\lambda , \lambda](1\ot g\ot r\ot 1)
=2\lambda(\lambda\ot 1)(g\ot r)$.
Similarly, 
\[
  [\alpha,\lambda ] (1\ot g\ot r\ot 1) 
     = - \lambda (1\ot \alpha) (g\ot r).
\]
Finally, note that $[\alpha,\alpha]|_{X_{1,2}}= 0$ automatically
for degree reasons.
Just as in our earlier calculation, we find that
\[
   d^*(\beta)(1\ot g\ot r\ot 1) =
  (1\ot \beta - (\beta\ot 1)(1\ot \sigma)(\sigma\ot 1))
(g\ot r).
\]
Therefore, $[\alpha+\lambda,\alpha+\lambda] = 2d^*(\beta)$
on $X_{1,2}$ if and only if
\[
   2\lambda (\lambda\ot 1) - 2 \lambda (1\ot \alpha) =
   2\ot \beta - 2(\beta\ot 1)(1\ot \sigma)(\sigma\ot 1)
\]
on $kG\ot R$. 
This is equivalent to Theorem~\ref{thm:main}(2).

On $X_{0,3}$, the bracket $[\lambda,\lambda]$ vanishes. 
We compute $[\alpha,\lambda]$ and $[\alpha,\alpha]$
on an element $1\ot  x\ot 1$ of $X_{0,3}$
with $x$ in $(V\ot R)\cap (R\ot V)$.
Note that $\psi^*(\alpha)(r)=\alpha\psi(r)=
\alpha(\psi\phi)r=\alpha(r)$ for all $r$ in $R$.
Thus
$$
(\psi^*(\alpha)\ot 1-1\ot \psi^*(\alpha))(x)
=
(\alpha\ot 1-1\ot\alpha)(x)
$$
and therefore 
\begin{eqnarray*}
[ \alpha , \alpha ](1\ot x\ot 1) & = & 2\psi^*(\alpha)(\alpha\ot 1
      - 1\ot \alpha)(x)  \ \mbox{ and}\\
 {[} \alpha , \lambda {]} (1\ot x\ot 1) & = & \psi^*(\lambda)(\alpha\ot 1 -
    1\ot \alpha)(x).
\end{eqnarray*}

We apply $\psi$ to $(\alpha\ot 1 - 1\ot \alpha)(x)$
using Lemma~\ref{lem:maps}.
Since $(\alpha\ot 1)(x)$ lies in $(V\ot kG)\ot V \subset A\ot A$
and $(1\ot \alpha)(x)$ lies in $V\ot (V\ot kG)\subset A\ot A$,
we use~(\ref{explicitpsivalues}) to apply $\psi$:
$$
\psi(\alpha\ot 1 - 1\ot \alpha)(x)
=
(\psi(1\ot \sigma)(\alpha\ot 1) - \psi(1\ot \alpha))(x) \ + \ y
$$
for some element $y$ in $X_{1,1}$. However, $\alpha$ is zero on $X_{1,1}$,
so
$$
\psi^*(\alpha)(\alpha\ot 1 - 1\ot \alpha)(x)
=
\alpha\psi((1\ot \sigma)(\alpha\ot 1) - 1\ot \alpha)(x) \, .
$$
We assume Condition (a) which we have shown implies Condition~(6) of
Theorem~\ref{thm:main}, i.e.,
$((1\ot \sigma)(\alpha\ot 1) - 1\ot \alpha)(x)$
lies in $R\ot kG$ since it is zero upon projection to $A$.
By the proof of Lemma~\ref{lem:maps},
$$\phi((1\ot \sigma)(\alpha\ot 1) - 1\ot \alpha)(x)
=
((1\ot \sigma)(\alpha\ot 1) - 1\ot \alpha)(x),
$$
and applying $\alpha\psi$ gives
$\alpha((1\ot \sigma)(\alpha\ot 1) - 1\ot \alpha)(x)$
since 
$\psi\phi=1$.
Hence 
$$
  [ \alpha, \alpha ](1\ot x\ot 1)  =  
     2\alpha ((1\ot \sigma)(\alpha\ot 1) - 1\ot \alpha) (x)\, .
$$
Similarly, we apply $\psi^*(\lambda)$ to $(\alpha\ot 1 - 1\ot \alpha)(x)$
again using~(\ref{explicitpsivalues}).
Recall that $\lambda$ is only nonzero on $X_{1,1}$,
and $\psi(1\ot\alpha)$ intersects $X_{1,1}$ at $0$;
hence $\psi^*(\lambda)(\alpha\ot 1-1\ot\alpha)(x) =
\psi^*(\lambda)(\alpha\ot 1)(x)$
and
$$
[\alpha,\lambda](1\ot x\ot 1)
=
\psi^*(\lambda)(\alpha\ot 1)(x)=
(\sum_{g\in G}\alpha_g\ot 
    \lambda(g\ot - )) (x) \, .
$$
Therefore $[\alpha+\lambda,\alpha+\lambda]=2d^*(\beta)$ on 
$X_{0,3}$ if and only if 
Theorem~\ref{thm:main}(4) holds.

\subsection*{Condition (c): $[\alpha+\lambda, \beta]= 0$} 
On $X_{3,0}$ $X_{2,1}$, and $X_{1,2}$, the left side of
this equation is automatically 0 for degree reasons. We will compute values
on $X_{0,3}$. Similar to our previous calculation,
we find 
\[
   [\lambda,\beta ] = \lambda (\beta\ot 1) 
\quad \mbox{ and } \quad 
   [\alpha,\beta] = \beta  ((1\ot \sigma)(\alpha\ot 1) - 1\ot \alpha)
\]
on $(V\ot R)\cap (R\ot V)$. 
So $[\alpha+\lambda,\beta]= 0$ if and only
if $\beta ((1\ot\sigma)(\alpha\ot 1) - 1\ot \alpha) = 
- \lambda(\beta\ot 1) $. 
This is precisely Theorem~\ref{thm:main}(5).
\end{proof}


\section{Application:
Group actions on  polynomial rings}
\label{sec:CDL}

We now consider the special case when 
$S$ is the symmetric algebra $S(V)$ of a finite dimensional
$k$-vector space $V$.
Let $G$ be a finite group
acting on $S(V)$ by graded automorphisms.
Let $\chlk$ be the $k$-algebra
generated by the group ring $kG$ together with 
the vector space $V$ and subject to the relations
\begin{itemize}
\item
$gv-\, ^gv g -\lambda(g,v), \quad\ \  \text{ for }g\text{ in } G, \ v\text{ in } V$
\item
$vw-wv-\kappa(v,w), \quad\text{ for }v,w \text{ in } V,$
\end{itemize}
where
$$
\lambda:kG \times V \rightarrow kG, \quad
\kappa: V \times V \rightarrow kG \oplus (V\otimes kG)
$$
are bilinear  functions.
Letting $\kappa^C$ and $\kappa^L$ be the projections of $\kappa$
onto $kG$ and $V\ot kG$, respectively, 
$\chlk$ is the algebra $\chabl$ from earlier sections
with $\alpha=\kappa^L$ and $\beta=\kappa^C$.
Its homogeneous version 
is the algebra 
$$\text{HomogeneousVersion}(\chlk)=S(V)\rtimes G
=\mathcal{H}_{0,0}\ .$$ 
We say that $\chlk$ is a {\em Drinfeld orbifold algebra}
if it has the PBW property: 
$$\text{Gr}\,{\chlk} \cong S(V)\rtimes G
$$
as graded algebras.
Thus Drinfeld orbifold algebras 
are  PBW deformations of $S(V)\rtimes G$ .

In characteristic zero,
our definition of Drinfeld orbifold algebra coincides with that in~\cite{doa},
up to isomorphism, even though no parameter $\lambda$ appears there. 
This is a consequence of Theorem~\ref{thm:nonmod} in the next section:
In this nonmodular case, $\chlk$ is isomorphic to $\cH_{0,\kappa'}$ for some $\kappa'$.

The algebras $\mathcal{H}_{\lambda,\kappa}$ include as special cases many algebras 
of interest in the
literature, and our Theorem~\ref{RawPBWConditions} 
below unifies results giving necessary
and sufficient conditions on parameter functions for $\mathcal{H}_{\lambda,\kappa}$ to
have the PBW property. 
When $\lambda =0$ and $\kappa^L=0$, 
Drinfeld orbifold algebras  $\mathcal{H}_{0,\kappa}$
include Drinfeld's Hecke algebras~\cite{Drinfeld} and 
Etingof and Ginzburg's symplectic reflection algebras~\cite{EG}.
When $\lambda=~0$ and $\kappa^C=0$, Drinfeld orbifold algebras  $\mathcal{H}_{0,\kappa}$
exhibit a Lie type structure: Many of the conditions of 
Theorem~\ref{RawPBWConditions} below
are vacuous in this case, while
Condition~(3) states that $\kappa^L$ is $G$-invariant and
Conditions~(4) and~(6) are analogs of the Jacobi identity twisted by the group action. 
When $\kappa =0$, Drinfeld orbifold algebras $\mathcal{H}_{\lambda,0}$
include Lusztig's graded affine Hecke algebras \cite{Lusztig89}.

The following theorem simultaneously generalizes \cite[Theorem 3.1]{doa}
and \cite[Theorem 3.1]{ueber}. 

\begin{thm}
\label{RawPBWConditions}
Let $G$ be a finite group acting linearly on $V$, a finite dimensional
$k$-vector space.  Then $\chlk$
is a PBW deformation of $S(V)\rtimes G$ if and only if
\begin{itemize}
\item[(1)]
$\lambda(gh,v)=\lambda(g,\, ^hv)h+g\lambda(h,v)$,
\item[(2)]\rule{0ex}{4ex}
$
\kappa^C(\, ^gu,\, ^gv)g-g\kappa^C(u,  v)
\ =\ 
\lambda\big(\lambda(g,v), u\big)-\lambda\big(\lambda(g,u), v\big)
+\displaystyle{\sum_{a\in G}}\lambda\big(g,\kappa^L_a(u,v)\big)a\, ,%
$
\item[(3)]\rule{0ex}{4ex}
$\ ^g\big(\kappa^L_{g^{-1}h}(u,v)\big)
-\kappa^L_{hg^{-1}}(^gu,\ ^gv)
=
(^hv-\ ^gv)\lambda_h(g,u)-
(^hu-\ ^gu)\lambda_h(g,v)$, 
\item[(4)]\rule{0ex}{4ex}
\vspace{-3.3ex}
$$
\hspace{-17ex}
\begin{aligned}\hphantom{x}
0\ =\  &  \ 2\sum_{\sigma\in\Alt_3}
\kappa^C_g(v_{\sigma(1)},v_{\sigma(2)})(v_{\sigma(3)}-\, ^gv_{\sigma(3)})
\\
& +\ \sum_{\substack{a\in G\\ \sigma\in \Alt_3\rule{0ex}{1.5ex}}}
\kappa_{ga^{-1}}^L\big(v_{\sigma(1)}+\, ^a{v_{\sigma(1)}}, 
\kappa_a^L(v_{\sigma(2)},v_{\sigma(3)})\big)
\\
&-2\sum_{\substack{a\in G\\ \sigma\in \Alt_3\rule{0ex}{1.5ex}}} 
\kappa_a^L(v_{\sigma(1)},v_{\sigma(2)})\lambda_g(a,v_{\sigma(3)})\, ,
\end{aligned}
$$
\item[(5)]\rule{0ex}{3ex}
\vspace{-3.5ex}
$$
\hspace{-10ex}
\begin{aligned}
&2\sum_{\sigma\in\Alt_3}
\lambda\big(\kappa^C(v_{\sigma(1)},v_{\sigma(2)}),v_{\sigma(3)}\big)
\\ & \quad\quad\quad
= -\sum_{\substack{a\in G\\ \sigma\in \Alt_3\rule{0ex}{1.5ex}}} 
\kappa_{ga^{-1}}^C\big(v_{\sigma(1)}+\ ^av_{\sigma(1)}, 
\kappa_a^L(v_{\sigma(2)},v_{\sigma(3)})\big) ,
\end{aligned}
$$
\item[(6)]\rule[-2ex]{0ex}{3ex}
$0=
\kappa_g^L(u,v)(w-\ ^gw)+
\kappa_g^L(v,w)(u-\ ^gu)+
\kappa_g^L(w,u)(v-\ ^gv) $\, ,
\end{itemize}
in $S(V)\rtimes G$, for all $g,h$ in $G$ and all $u,v,w, v_1, v_2, v_3$ in $V$.
\end{thm}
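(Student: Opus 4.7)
The plan is to specialize Theorem~\ref{thm:main} to $S = S(V)$, identifying parameters by $\alpha = \kappa^L$ and $\beta = \kappa^C$ so that $\chlk = \chabl$ as defined in~(\ref{eqn:Habc}). I then translate each of the six numbered conditions of Theorem~\ref{thm:main} term by term into the corresponding condition of Theorem~\ref{RawPBWConditions}.

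First, I pin down the subspaces on which the conditions live. For the symmetric algebra, the Koszul relations are $R = \Span_k\{u\ot v - v\ot u : u, v \in V\}$, and, since the characteristic of $k$ is not~$2$, the splitting $V\ot V = S^2(V)\oplus R$ gives
\[
(V\ot R)\cap (R\ot V) = \Span_k\bigl\{u\ot [v,w] + v\ot [w,u] + w\ot [u,v] : u,v,w\in V\bigr\},
\]
where $[v,w] := v\ot w - w\ot v$; equivalently this is the image of the antisymmetrization $\bigwedge^3 V \to V^{\ot 3}$, which is the source of the sums over $\Alt_3$ in conditions~(4) and~(5).

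The condition-by-condition translation is routine for most. Evaluating condition~(1) of Theorem~\ref{thm:main} on $1\ot g\ot h\ot v$ gives the cocycle identity~(1). Evaluating condition~(2) on $1\ot g\ot (u\ot v - v\ot u)$ and reading off in $kG$ gives~(2); evaluating condition~(3) similarly but reading off in $V\ot kG$ and collecting the coefficient of each $h\in G$ (after pushing group elements to the right via $gv = \,^gv g$) gives~(3). Condition~(6) follows by evaluating $\alpha\ot 1-1\ot\alpha$ on the cyclic element, using the commutativity of $S(V)$, and extracting coefficients of each $g$.

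The technical core is conditions~(4) and~(5). On the cyclic element $x$, condition~(6) (already translated) guarantees that $(1\ot\sigma)(\alpha\ot 1)(x) - (1\ot\alpha)(x)$ lies in $R\ot kG$ after projection to $A$, so $\alpha$ and $\beta$ may be applied. Computing $\alpha(y)$ and $\beta(y)$ for such an antisymmetric element $y\in R$ written in a basis $v_i\ot v_j$ requires extracting the commutator representation, and the combinatorial factors that arise become the factors of~$2$ displayed in the theorem. Expanding the three summands of $x$ produces a sum over the three cyclic rotations of $(u,v,w)$, identified with $\Alt_3$; antisymmetry of $\kappa^L$ and $\kappa^C$ in their two arguments rewrites the outer arguments in the form $v_{\sigma(1)} + \,^a v_{\sigma(1)}$ appearing in~(4) and~(5). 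Isolating the $\lambda$ contribution from $\sum_h \alpha_h\ot\lambda(h\ot -)$ and reading off the coefficient of each $g\in G$ completes the match. The main obstacle is bookkeeping: tracking the extensions of $\alpha,\beta,\lambda$ across tensor factors of $kG$, interpreting ``upon projection to $S\rtimes G$'' at each intermediate step, rewriting $V\ot kG$-valued quantities as $\sum_g (\cdot) g$ via the twist $\sigma$ and the product in $S\rtimes G$, and---most delicately---applying $\alpha$ and $\beta$ to antisymmetric elements of $V\ot V$. Once these are handled carefully, each of the six conditions of Theorem~\ref{thm:main} unfolds mechanically into the corresponding condition of Theorem~\ref{RawPBWConditions}.
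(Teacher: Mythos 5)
Your proposal is correct and follows the paper's own route: the paper proves Theorem~\ref{RawPBWConditions} precisely by specializing Theorem~\ref{thm:main} to $S=S(V)$ with $\alpha=\kappa^L$, $\beta=\kappa^C$ and rewriting the six conditions explicitly on elements, with $(V\ot R)\cap(R\ot V)$ identified with the antisymmetrized elements giving the $\Alt_3$ sums. Your account of the translation (including the role of condition~(6) in making (4) and (5) well defined) supplies at least as much detail as the paper's one-line proof, so there is nothing to add.
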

\begin{proof}
The theorem follows from
Theorem~\ref{thm:main} by rewriting the conditions explicitly on
elements. 
\end{proof}

Alternatively, 
the conditions of the theorem follow from 
strategic and tedious application of
the Composition-Diamond Lemma
(such as in the proof of \cite[Theorem~3.1]{doa}).
Condition (1) follows from consideration of overlaps of the
form $ghv$ for $g,h$ in $G$, $v$ in $V$.
For Conditions~(2) and~(3),
we consider overlaps of the form $gwv$ for $w$ in $V$;
terms of degree~$1$ give rise to Condition~(3) while
those of degree~$0$ give rise to Condition~(2).
Overlaps of the form $uvw$ for $u$ in $V$
give the other conditions:
Terms of degree~$0$ give rise to Condition~(5),
terms of degree~$1$ give rise to Condition~(4), 
and terms of degree~$2$ give rise Condition~(6).
Note that we assume Condition~(6) to deduce Conditions~(4)
and~(5).

In the theorem above, 
we may set $\kappa^L = 0$ to obtain the conditions
of~\cite[Theorem~3.1]{ueber}
or instead set $\lambda = 0$
to obtain the conditions of~\cite[Theorem~3.1]{doa}.
Note that in Theorem~\ref{RawPBWConditions},
Condition~(3) measures the extent to which
$\kappa^L$ is $G$-invariant. Indeed,
 the failure
of $\kappa^L$ to be $G$-invariant is recorded by $d^*(\lambda)$, and 
so $\lambda$ is a cocycle if and only if $\kappa^L$
is invariant.
Condition~(3)  in particular implies that $\kappa_{1_G}^L$ is
$G$-invariant.

The conditions in the theorem also generalize a special case of 
Theorem~2.7 in \cite{Khare} by Khare:
He more generally considered actions of cocommutative algebras, while we
restrict to actions of group algebras $kG$. 
Khare more specifically restricted $\kappa^L$ to take values in the subspace
$V\cong V\ot k$ of $V\ot kG$.

We next give some examples of Drinfeld orbifold algebras.
The first example exhibits parameters $\kappa^C$, $\kappa^L$, and $\lambda$
all nonzero.  The second example shows that a new
class of deformations is possible in the modular setting;
see Remark~\ref{counterexample}.

\begin{example}
{\em Let $k$ have prime characteristic $p>2$, 
and 
$V=kv_1 \oplus kv_2\oplus kv_3$. 
Let $G\leq \text{GL}_3(k) $ be the cyclic group of order $p$
generated by the transvection $g$ in $\text{GL}(V)$ 
fixing $v_1,v_2$ and mapping $v_3$ to $v_1 + v_3$:
$$
G=\left<g=\left(\begin{smallmatrix} 1 & 0 & 1\\0 & 1 & 0 \\ 0 & 0 & 1  
\end{smallmatrix}\right)\right>\, .
$$ 
Define $$\lambda(g^i,v_3)=ig^{i-1},\
\kappa^C(v_1,v_3)=g=-\kappa^C(v_3,v_1),\ 
\kappa^L(v_1, v_3)=v_2=-\kappa^L(v_3, v_1),$$
and set $\lambda, \kappa^C, \kappa^L$ to be zero on all other pairs
of basis vectors. 
Then 
\[
\begin{aligned}
\chlk=T_{kG}(kG \ot V\ot kG)/&(gv_1-v_1g, \ gv_2-v_2g, \ gv_3-v_1g-v_3g-1,\\
& \hspace{.3cm}
v_1v_3-v_3v_1-v_2-g, \ v_1v_2-v_2v_1, \ v_2v_3-v_3v_2  )
\end{aligned}
\]
is a PBW deformation of $S(V)\rtimes G$
by Theorem~\ref{RawPBWConditions}.
}\end{example}

\begin{example}\label{counterexampleexplicit}
{\em
Let $k$ have prime characteristic $p>2$ and $V = kv\oplus kw$.
Suppose $G\leq {\rm{GL}}_2(k)$ is the cyclic group of order $p$ generated
by $g = \left(\begin{smallmatrix} 1 & 1 \\ 0 & 1 \end{smallmatrix}\right)$ so that
${}^g v =v$ and ${}^g w= v+w$.
Define 
\[
  \lambda( g^i,v) = i g^i,\
  \lambda(1,w)=  \lambda( g,w) = 0,\
  \lambda( g^i,w) = \tbinom{i}{2}\, g^i \text{ for }i>2,
\]
and $\kappa = 0$.
Then one may check the conditions of Theorem~\ref{RawPBWConditions} to conclude
that 
$$\mathcal{H}_{\lambda,0}
 = T_{kG}(kG\ot V\ot kG) / (gv-vg - g , \ gw-vg-wg , \ vw-wv)
$$ 
is a PBW deformation of $S(V)\rtimes G$.
}\end{example}

\section{Comparison of modular and nonmodular settings}

We now turn to the nonmodular setting, when the characteristic
of the underlying field $k$ does not divide the order of the acting
group $G$. We compare algebras modelled on Lusztig's
graded affine Hecke algebra~\cite{Lusztig89} 
to algebras modelled
on Drinfeld's Hecke algebra~\cite{Drinfeld} (such as the symplectic reflection 
algebras of  Etingof and Ginzburg~\cite{EG}).
The following theorem strengthens Theorem~4.1 of~\cite{ueber}
while simultaneously generalizing it to the setting
of Drinfeld
orbifold algebras (see~\cite{doa}) in the nonmodular setting.
The theorem was originally shown for Coxeter groups
and Lusztig's graded affine Hecke algebras in~\cite{RamShepler}. 

\begin{thm}\label{thm:nonmod}
Suppose $G$ acts linearly on a finite dimensional vector 
space $V$
over a field $k$ whose characteristic is coprime to $|G|$.
If the algebra $\mathcal{H}_{\lambda,\kappa}$ defined
in Section~\ref{sec:CDL} is a  PBW deformation
of $S(V)\rtimes G$ for some parameter functions
$$\lambda: kG\times V\rightarrow kG\quad
\text{and}\quad \kappa: V\times V \rightarrow kG \oplus 
(V\otimes kG),$$
then there exists a parameter function 
$$
\kappa': V\times V \rightarrow kG \oplus (V\otimes kG)$$
such that 
$$
\mathcal{H}_{\lambda, \kappa}\cong \mathcal{H}_{0,\kappa'}
$$
as filtered algebras and thus $\mathcal{H}_{0,\kappa'}$
also exhibits the PBW property.
\end{thm}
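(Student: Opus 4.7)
The plan is to produce an explicit change of generators that absorbs the parameter $\lambda$, using the nonmodular hypothesis via an averaging argument. I would construct a $k$-linear map $\tau : V \to kG$ together with a filtered algebra isomorphism $\Phi : \mathcal{H}_{0, \kappa'} \to \mathcal{H}_{\lambda, \kappa}$ sending each $g \in G$ to itself and each $v \in V$ to $v - \tau(v)$, for a new parameter function $\kappa'$ determined by the construction.

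Applying $\Phi$ formally to the group-action relation $gv - {}^gv\, g$ and reducing modulo the defining relations of $\mathcal{H}_{\lambda, \kappa}$ produces $\lambda(g, v) - g\tau(v) + \tau({}^gv)\, g$. So I need $\tau$ to satisfy
$$\lambda(g, v) = g\tau(v) - \tau({}^gv)\, g,$$
a 1-coboundary realization of $\lambda$. Since $\lambda$ is a 1-cocycle by condition~(1) of Theorem~\ref{RawPBWConditions}, the nonmodular averaging formula
$$\tau(v) := \tfrac{1}{|G|} \sum_{h \in G} h\, \lambda(h^{-1}, v)$$
should work: substituting $k = gh$ and applying the cocycle identity to $\lambda(k^{-1}g, v)$, the two sums in the expansion of $g\tau(v)$ collapse to $\tau({}^g v)\, g + \lambda(g, v)$, as desired. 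This is precisely the step where $|G|$ invertible in $k$ is essential.

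Having fixed $\tau$, I would define $\kappa'(v, w)$ as the unique element of $kG \oplus (V \otimes kG)$ for which $\Phi(vw - wv - \kappa'(v, w))$ vanishes in $\mathcal{H}_{\lambda, \kappa}$. Concretely, one expands $(v - \tau(v))(w - \tau(w)) - (w - \tau(w))(v - \tau(v))$, uses $gv = {}^gv\, g + \lambda(g, v)$ to commute elements of $kG$ past $v$ and $w$, and collects the terms; the PBW property of $\mathcal{H}_{\lambda, \kappa}$ guarantees that what remains of filtered degree $\leq 1$ identifies canonically with an element of $kG \oplus (V \otimes kG)$. Then $\Phi$ descends to a filtered algebra homomorphism, and the symmetric substitution $v \mapsto v + \tau(v)$ provides a two-sided inverse on generators, so $\Phi$ is an isomorphism and the PBW property transfers to $\mathcal{H}_{0, \kappa'}$. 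The step I expect to require the most care is the bookkeeping for $\kappa'$: confirming that the correction terms from commuting $\tau(v)$ past $w$ and from the $kG$-commutator $[\tau(v), \tau(w)]$ assemble into a well-defined function in $kG \oplus (V \otimes kG)$. The $\lambda = 0$ specializations of the PBW conditions in Theorem~\ref{RawPBWConditions} then hold automatically for $\kappa'$, since $\Phi$ is an isomorphism onto an algebra that already satisfies the full theorem.
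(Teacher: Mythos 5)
Your proposal is correct, and its core is the same untwisting substitution the paper uses: your averaged map $\tau(v)=\tfrac{1}{|G|}\sum_{h}h\,\lambda(h^{-1},v)$ is exactly $-\gamma(v)$ for the paper's $\gamma$ (this follows from condition~(1) of Theorem~\ref{RawPBWConditions} together with $\lambda(1_G,v)=0$), so your $v\mapsto v-\tau(v)$ is the paper's $v\mapsto v+\gamma(v)$, and your coboundary computation $g\tau(v)-\tau({}^gv)g=\lambda(g,v)$ is the key nonmodular step in both arguments. Where you genuinely diverge is in the treatment of $\kappa'$: the paper writes $\kappa'$ down in closed form and then verifies that $uv-vu-\kappa'(u,v)$ lies in the kernel of the substitution map, using condition~(3) of Theorem~\ref{RawPBWConditions} (the invariance defect of $\kappa^L$ measured by $\lambda$), whereas you define $\kappa'(v,w)$ implicitly as the degree~$\leq 1$ element of $\mathcal{H}_{\lambda,\kappa}$ representing the commutator of the new generators, which is legitimate because the PBW hypothesis identifies $F^1(\mathcal{H}_{\lambda,\kappa})$ with $kG\oplus(V\otimes kG)$. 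Your route is shorter and dodges the bookkeeping; the paper's route buys an explicit formula for $\kappa'$ and a verification that does not lean on the implicit identification. One step you should tighten: asserting that $v\mapsto v+\tau(v)$ is a two-sided inverse ``on generators'' does not yet show that this inverse is well defined on $\mathcal{H}_{\lambda,\kappa}$, since one must check it annihilates $vw-wv-\kappa(v,w)$, which again requires relating $\kappa$ to $\kappa'$; the cleanest repair is to observe instead that your $\Phi$ is a surjective filtered map (its image contains $kG$ and hence $v=\Phi(v+\tau(v))$) and then compare dimensions of filtered pieces, $\dim_k F^d(\mathcal{H}_{0,\kappa'})\leq \dim_k F^d(S(V)\rtimes G)=\dim_k F^d(\mathcal{H}_{\lambda,\kappa})$, exactly as in the dimension count at the end of the proof of Theorem~\ref{thm:hom}, which gives both the isomorphism and the PBW property of $\mathcal{H}_{0,\kappa'}$ at once.
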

\begin{proof}
As in~\cite{ueber},
define $\gamma:V\otimes kG\rightarrow kG$ by
$$
\gamma(v\otimes g)=\tfrac{1}{|G|}\sum_{a,b\, \in G}
\lambda_{ab}\big(b, \, ^{b^{-1}}v\big) ag
=\sum_{a\in G}\ \gamma_a(v)\, ag
$$
for $\gamma_a:V \rightarrow k$ giving the
coefficient of $ag$ in $G$, and as
before, for each $h$ in $G$, $\lambda_h: kG\times V\rightarrow k$ is
defined by $\lambda(b, v) = \sum_{h\in G}\lambda_h(b, v) h$
for $b$ in $G$ and $v$ in $V$. 
We abbreviate $\gamma(u)$ for $\gamma(u\ot 1)$
for $u$ in $V$ in what follows for simplicity of notation. 
Define a parameter function $\kappa':V\times V
\rightarrow kG\oplus (V\otimes kG)$
by
$$
\begin{aligned}
\kappa'(u,v)\ =\ 
&\ \ \gamma(u)\gamma(v) -\gamma(v)\gamma(u)
+\lambda(\gamma(u), v)- \lambda(\gamma(v), u)\\
&+\kappa(u,v)- \kappa^L(u,v)\\
&+\tfrac{1}{|G|}\sum_{g\in G}
(1-\gamma)\big((\, ^g\kappa^L)(u, v)\big)
\end{aligned}
$$
for $u,v$ in $V$.  Here, $\kappa^L(u,v)$ is again the
degree $1$ part of $\kappa$, i.e.,
the projection of $\kappa(u,v)$
to $V\otimes kG$,
and we take the $G$-action on $\kappa^L$
induced from the action of $G$ on itself
by conjugation,
i.e.,
$(^g\kappa^L)(u,v)=\, ^g(\kappa^L(\, ^{g^{-1}}u,\, ^{g^{-1}}v))$
with $^g(v\otimes h)=\, ^gv \otimes ghg^{-1}$
for $g,h$ in $G$.

Let
$$
F=T_{kG}(kG\ot V\ot kG)
$$
and identify $v$ in $V$ with $1\ot v\ot 1$ in $F$. 
Define an algebra homomorphism
$$
f:F\rightarrow \mathcal{H}_{\lambda, \kappa}
\quad\text{ by }\quad
v\mapsto v + \gamma(v)
\text{ and }
g\mapsto g
\quad\text{for all } g\in G, v\in V,
$$
after identifying $\chlk$ with a quotient of $F$.
We will use Theorem~\ref{RawPBWConditions}
to verify that the relations defining $\mathcal{H}_{0,\kappa'}$
as a quotient of $F$ lie in the kernel of $f$.
It will follow that 
 $f$ extends to a filtered algebra homomorphism
$$f:\mathcal{H}_{0,\kappa'}\rightarrow \mathcal{H}_{\lambda,\kappa}\, .$$

We first check that 
elements $uv-vu-\kappa'(u,v)$ in $F$ for $u,v$ in $V$ 
are mapped to zero under $f$.
On one hand, 
$\kappa'(u,v)$ in $F$ is mapped under
$f$ to
$$
\kappa'(u,v)+\tfrac{1}{|G|}\sum_{g\in G}
\gamma\big((\, ^g\kappa^L)(u,v)\big).
$$
On the other hand, the commutator $[u,v]=uv-vu$ in $F$ maps to the
commutator 
$$[u+\gamma(u),v+\gamma(v)]
=
[u,v] + [\gamma(u), \gamma(v)]
+ \big(u\gamma(v)-\gamma(v)u-v\gamma(u)+\gamma(u)v\big)
$$ in $\chlk$.  But $[u,v]$ is $\kappa(u,v)$
in $\chlk$, and $\kappa'(u,v)$ by definition expresses
the commutator $[\gamma(u),\gamma(v)]$
in terms of $\kappa(u,v)$
and other terms.
Hence $[u,v]+[\gamma(u),\gamma(v)]$
simplifies
to
$$
\kappa'(u,v) - \lambda(\gamma(u),v)+\lambda(\gamma(v),u)
+\kappa^L(u,v)-\tfrac{1}{|G|}
\sum_{g\in G}\,(1-\gamma)\big((^g\kappa^L)(u, v)\big)
$$
in $\chlk$.  
We may also rewrite
$$
u\gamma(v)-\gamma(v)u-v\gamma(u)+\gamma(u)v
$$
as
$$
\lambda(\gamma(u),v)-\lambda(\gamma(v),u) 
-
\sum_{g\in G} \big(\gamma_g(v)(\,^gu-u)g
-\gamma_g(u)(\,^gv-v)g \big).
$$
Hence, the relation $uv-vu-\kappa'(u,v)$
in $F$ maps under $f$
to
\begin{equation}
\label{relationzero}
\kappa^L(u,v)
-\tfrac{1}{|G|}\sum_{g\in G}\,(^g\kappa^L)(u,v)
- \sum_{g\in G} \big(\gamma_g(v)(\,^gu-u)g
-\gamma_g(u)(\,^gv-v)g \big).
\end{equation}
We may then argue as in 
the proof of Theorem~4.1 of~\cite{ueber}
to show that Condition~(3) of Theorem~\ref{RawPBWConditions}
implies that
$$
\begin{aligned}
\sum_{g\in G} \big(\gamma_g(v)(\,^gu-u)g
-\gamma_g(u)(\,^gv-v)g \big)
& =\kappa^L(u,v)-\tfrac{1}{|G|}
\sum_{g,a\in G} \ ^g\big(\kappa^L_a(\,^{g^{-1}}u, \,^{g^{-1}}v)\big)gag^{-1}\\
&=\kappa^L(u,v)-\tfrac{1}{|G|}
\sum_{g\in G} \ (^g\kappa^L)(u, v)\, .
\end{aligned}
$$
Thus expression~(\ref{relationzero}) above is zero
and $uv-vu-\kappa'(u,v)$ lies in the kernel of $f$ for all
$u,v$ in $V$.

We may follow the rest of the proof of Theorem~4.1 of~\cite{ueber}
to see that $gv - \, ^g v g$ lies in the kernel of $f$
for all $g$ in $G$ and $v$ in $V$
and that $f$ is an isomorphism.
\end{proof}

\begin{remark}\label{counterexample}
{\em
Theorem~\ref{thm:nonmod} above is false in the modular setting,
i.e., when char~$(k)$ divides $|G|$.
Indeed, Example~\ref{counterexampleexplicit}
gives an algebra $\mathcal{H}_{\lambda,0}$
exhibiting the PBW property for some parameter function
$\lambda$, but we claim that there is no parameter
 $\kappa ' : V\times V \rightarrow kG\oplus (V\ot kG)$
for which $\mathcal{H}_{\lambda,0}\cong \mathcal{H}_{0, \kappa'}$ as filtered algebras.

If there were,
then $\mathcal{H}_{0,\kappa'}$ would exhibit the PBW property and
any isomorphism $f:  \mathcal{H}_{\lambda,0}\rightarrow \mathcal{H}_{0, \kappa'}$
would map the relation
$$gv-vg-g=gv-\, ^gvg-\lambda(g,v)=0$$ 
in $\mathcal{H}_{\lambda,0}$ to $0$ in 
$\mathcal{H}_{0, \kappa'}$.
But $f$ is an algebra homomorphism
and takes the filtered degree $1$ component of 
$\mathcal{H}_{\lambda,0}$ to that of $\mathcal{H}_{0, \kappa'}$,
giving a relation
$$f(g) f(v) -f(v)f(g)-f(g)=0$$ 
in $ \mathcal{H}_{0, \kappa'}$ with first two
terms of the left hand side of filtered degree $1$. 
In particular, the sum of the terms
of degree $0$ vanish.  But this implies that $f(g)=0$
since the degree $0$
terms of $f(g)f(v)-f(v)f(g)$ cancel with each other
as $kG$ is commutative.
This contradicts the assumption that $f$ is an isomorphism. }
\end{remark}


\end{document}